\newtheorem{theorem}{Theorem}[section]
\newtheorem*{theorem*}{Theorem}
\newtheorem{lemma}[theorem]{Lemma}
\newtheorem*{lemma*}{Lemma}
\newtheorem{proposition}[theorem]{Proposition}
\newtheorem*{proposition*}{Proposition}
\newtheorem{conjecture}[theorem]{Conjecture}
\theoremstyle{definition}
\newtheorem{definition}[theorem]{Definition}
\newtheorem*{definition*}{Definition}
\newtheorem{example}[theorem]{Example}
\newtheorem*{example*}{Example}
\newtheorem*{defn*}{Definition}
\newtheorem{corollary}[theorem]{Corollary}
\newtheorem*{corollary*}{Corollary}
\theoremstyle{remark}
\newtheorem*{remark}{Remark}
\newcommand{\Mod}[1]{\ (\mathrm{mod}\ #1)}
\title[Staircase Encodings and Boundary Grids]
{Resolving Two Conjectures on Staircase Encodings and Boundary Grids of $132$ and $123$-avoiding permutations}
\author{Shyam Narayanan}
\address{Department of Mathematics, Harvard University, Cambridge, MA 02138}
\email[Shyam Narayanan]{shyamnarayanan@college.harvard.edu}
\begin{document}

\begin{abstract}
	This paper analyzes relations between pattern avoidance of certain permutations and graphs on staircase grids and boundary grids, and proves two conjectures posed by Bean, Tannock, and Ulfarsson (2015). More specifically, this paper enumerates a certain family of staircase encodings and proves that the downcore graph, a certain graph established on the boundary grid, is pure if and only if the permutation corresponding to the boundary grid avoids the classical patterns 123 and 2143.
\end{abstract}

\maketitle

\section{Introduction and Preliminaries}
The study of permutation pattern avoidance became a significant topic in combinatorics starting in 1968, when Donald Knuth \cite{Knuth1968} noted that stack-sortable permutations corresponded to avoiding the permutation pattern $231$ (which we will explain shortly). Common problems include counting the number of permutations in $S_n$ (the set of permutations of $[n] = \{1, \dots, n\}$) avoiding a certain pattern or set of patterns, or determining asymptotics of these numbers in terms of $n$, or establishing Wilf-equivalences, i.e. showing that the number of permutations in $S_n$ avoiding a certain pattern (or set of) equals the number of permutations avoiding a different pattern (or set of). In this paper, we do not address these questions, but look at some related combinatorial objects explored in \cite{BeanTannockUlfarsson2016} and prove their conjectures.

We recall the definitions of order-isomorphic and containing a classical pattern.
\begin{definition}
	We say that two sequences $(a_1, \dots, a_k)$ and $(b_1, \dots, b_k)$ are \emph{order-isomorphic} if for any $1 \le i,j \le k$, we have $a_i < a_j$ if and only if $b_i < b_j$. In this case, we write $a_1 \cdots a_k \sim b_1 \cdots b_k$.
\end{definition}
\begin{definition}
	Let $\sigma \in S_k$.
	A permutation $\pi \in S_n$ \emph{contains} $\sigma$ as a \emph{classical pattern}
	if there exists $1 \le i_1 < \dots < i_k \le n$ such that 
	\[ \pi_{i_1} \pi_{i_2} \cdots \pi_{i_k} \sim \sigma_1\sigma_2\cdots\sigma_k. \]
	Alternatively, if a permutation does not contain $\sigma$,
	it is said to \emph{avoid} $\sigma$.
\end{definition}
\begin{example}
	The permutation $641532$ avoids the classical pattern $123$.
\end{example}

We also define a key set of permutations.
\begin{definition}
	For each $n$ and a permutation $\sigma$, we define $Av_n(\sigma)$ as the set of permutations in $S_n$ that avoid $\sigma$. We also say for a permutation $\pi \in Av_n(\sigma),$ that $\pi$ avoids $\sigma$ or $\pi$ is $\sigma$-avoiding. We can generalize this to avoiding more than one permutation pattern, by defining $Av_n(\sigma_1, \dots, \sigma_k)$ as the set of permutations in $S_n$ that avoid all of the permutations $\sigma_1, \dots, \sigma_k$.
	
	For a single permutation $\sigma$, we define $Co_n(\sigma)$ as the set of permutations in $S_n$ that contain $\sigma,$ so $Co_n(\sigma) = S_n \backslash Av_n(\sigma).$
\end{definition}

\begin{definition}
	For a permutation $\pi = \pi_1\pi_2\cdots \pi_n,$ we define $\pi_i$ to be a \emph{left-to-right minimum} if $\pi_j > \pi_i$ for all $j < i$.
\end{definition}

	Analogously define a \emph{left-to-right maximum}, \emph{right-to-left minimum}, and \emph{right-to-left maximum}.

\begin{example}
	In the permutation $536241$, $5, 3, 2, 1$ are the left-to-right minima and $1, 4, 6$ are the right-to-left maxima.
\end{example}

	For the remainder of the paper, we look primarily either at $123$-avoiding or $132$-avoiding permutations, so we note that it is well known that both $123$-avoiding permutations \cite{MacMahon1916} and $132$-avoiding permutations \cite{Knuth1968} are enumerated by the Catalan numbers, i.e. $|Av_n(123)| = \frac{1}{2n+1} \cdot {2n \choose n} = |Av_n(132)|.$

	In Section 2, we look at staircase grids, combinatorial structures related to permutations which are especially interesting in relation to 132-avoiding patterns, and enumerate a certain family of staircase grids, proving Conjecture 3.7 in \cite{BeanTannockUlfarsson2016} and providing one more enumeration not conjectured by \cite{BeanTannockUlfarsson2016}. In Section 3, we look at boundary grids and downcore graphs, combinatorial structures relating to 123-avoiding permutations, and show that the downcore graph of a grid is pure if and only if the permutation it relates to is both 123-avoiding and 2143-avoiding. This proves conjecture 6.1 in \cite{BeanTannockUlfarsson2016}.
	
\section{First Conjecture}

Consider an arbitrary permutation $\pi$ with the set of left-to-right minima $\pi_{k_1}, \pi_{k_2}, \dots, \pi_{k_a}$.

\begin{definition}
Define the \emph{staircase grid} $B_a$ as a staircase-like grid with $a$ rows and $a$ columns such that the top row and right column have $a$ boxes, the second-to-top row and second-to-right column have $a-1$ boxes, and so on. By a \textit{box}, we mean a unit square. See Figure \ref{StaircaseEncoding} for an example.

More specifically, define the \emph{staircase encoding} of $\pi$ as the grid $B_a$ combined with a number in each box. For the box in the $i^{\text{th}}$ row from the top and the $j^{\text{th}}$ column from the bottom, fill in the number of elements in $\pi$ which are strictly between $\pi_{k_i}$ and $\pi_{k_{i-1}}$ as a value and between position $k_j$ and $k_{j+1}$ in the order of the permutation (assume $\pi_{k_0} = k_{a+1} = n+1$). For an example, see Figure \ref{StaircaseEncoding}.
\end{definition}

\begin{figure}
\begin{ytableau}
\none & 2 & 0 & 1 \\
\none & \none & 1 & 0 \\
\none & \none & \none & 1
\end{ytableau}

\caption{The staircase encoding of $\pi = 58634127$. The staircase grid $B_3$ is the same, without numbers. The figure has $3$ left-to-right minima: $5, 3, 1$. The top-left corner is $2$ since $8, 6$ are greater than $5$ but come between $5$ and $3$ in the permutation. The other values are calculated similarly.}
\label{StaircaseEncoding}
\end{figure}

It turns out that staircase encodings have an interesting relation with $132$-avoiding permutations.

\begin{proposition}
	No two $132$-avoiding permutations have the same staircase encoding. \cite{BeanTannockUlfarsson2016}
\end{proposition}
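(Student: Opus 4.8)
The plan is to show that the staircase encoding is injective on $132$-avoiding permutations by exhibiting an explicit reconstruction procedure: given the grid $B_a$ with its filled-in values, I would recover the unique $132$-avoiding permutation that produces it. This reduces injectivity to proving that the reconstruction is forced at every step, which for $132$-avoiders it should be.

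First I would recall the structure of the encoding. The left-to-right minima $\pi_{k_1} > \pi_{k_2} > \cdots > \pi_{k_a}$ partition the positions into the blocks $[k_j, k_{j+1})$ and partition the values into the intervals $(\pi_{k_i}, \pi_{k_{i-1}})$. The entry in row $i$, column $j$ counts the elements lying simultaneously in value-interval $i$ and position-block $j$. The key structural fact I would establish is the \emph{$132$-avoidance constraint on the non-minima}: if $\pi$ avoids $132$, then within the block of positions following a given left-to-right minimum, the non-minimum entries must appear in \emph{decreasing} order. Indeed, suppose two non-minima $\pi_r < \pi_s$ with $r < s$ sit in the same position-block after minimum $\pi_{k_j}$; then since neither is a left-to-right minimum, there is an earlier minimum $\pi_{k_i}$ (in fact $\pi_{k_j}$ itself, or an earlier one) with $\pi_{k_j} < \pi_r$, and the triple $\pi_{k_j}\,\pi_r\,\pi_s$ forms a $132$ pattern. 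Hence the relative order of all non-minima is completely determined by their value.

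Given that constraint, reconstruction proceeds as follows. The column sums of the grid (together with the implicit minimum in each block) determine the length of each position-block, and the row sums determine how many values fall in each value-interval, so $n$ and the positions and values of the minima themselves are pinned down: $\pi_{k_a} = 1$, and reading the row/column totals recovers each $\pi_{k_j}$ and each $k_j$ inductively. It then remains to place the non-minima. Each non-minimum has a known value-interval (from its row) and a known position-block (from its column), and \emph{within} a position-block the $132$-constraint forces the entries to be listed in decreasing value order. I would argue that this simultaneously determines a unique interleaving: reading the blocks left to right and filling each block's non-minimum slots in decreasing order of value, using exactly the multiplicities recorded by the grid, yields one and only one candidate permutation. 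Finally I would verify this candidate indeed avoids $132$ and reproduces the given encoding, so the map is injective.

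The main obstacle I anticipate is the bookkeeping in the last step: proving that the decreasing-within-block rule plus the recorded multiplicities leaves no freedom in how values are assigned across different rows and columns. Two non-minima in the same column but different rows, or the same row but different columns, must be placed consistently, and one has to check that the $132$-avoidance forces a global order rather than merely a per-block order. I expect this to follow by a careful induction on the blocks (equivalently, on $a$), peeling off the top value-interval or the first position-block and invoking the inductive hypothesis on the remaining $132$-avoiding permutation, but making that peeling rigorous — ensuring the removal of a row or column again yields a valid staircase encoding of a genuine $132$-avoider — is where the real work lies.
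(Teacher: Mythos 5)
Your central structural claim is backwards, and the error propagates through the whole reconstruction. You assert that in a $132$-avoiding permutation the non-minima within a position-block must appear in \emph{decreasing} order, justifying this by saying that if non-minima $\pi_r < \pi_s$ with $r < s$ lie in the same block after the minimum $\pi_{k_j}$, then $\pi_{k_j}\pi_r\pi_s$ is a $132$ pattern. But since $\pi_{k_j} < \pi_r < \pi_s$ and $k_j < r < s$, that triple is order-isomorphic to $123$, which $132$-avoidance does not forbid. The true constraint is the opposite: if $\pi_r > \pi_s$ (a descent inside the block), then $\pi_{k_j}\pi_r\pi_s$ has values (smallest, largest, middle), a genuine $132$, so each block's entries must appear in \emph{increasing} order. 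Consequently your reconstruction rule (``fill each block's slots in decreasing order of value'') outputs the wrong candidate: applied to the encoding of the identity permutation $12\cdots n$ (one left-to-right minimum, a single box containing $n-1$) it produces $1\,n\,(n-1)\cdots 2$, which is not the preimage and for $n \ge 4$ is not even $132$-avoiding.

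Separately, even with the direction corrected, the per-block (column) constraint alone does not determine the permutation, because it does not decide \emph{which} values from a given value-interval land in which block; two singleton boxes in the same row could swap their values without violating any within-block ordering. You flag exactly this as ``the main obstacle'' and defer it to an unspecified induction, but this is where the remaining half of the argument lives, and the paper closes it with the companion observation you are missing: $132$-avoidance also forces the entries of each \emph{row} (same value-interval) to appear in increasing order, since a descent within a row together with the minimum $\pi_{k_i}$ again yields a $132$. With both the row and column increasing conditions, the box counts force the values and positions of all non-minima, and injectivity is immediate. So your overall strategy (explicit reconstruction) is the same as the paper's, but one of the two needed monotonicity lemmas is stated in the wrong direction with an invalid justification, and the other is left unproved.
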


\begin{proof}
	To avoid $132$ subsequences, note that in each row and column of the staircase encoding, the elements corresponding to each row and each column must be in increasing order, or else a $132$ subsequence will occur because of the left-to-right minima. The conclusion clearly follows.
\end{proof}

However, not all staircase encodings correspond to a $132$-avoiding permutations. For a staircase grid, consider the following graph.

\begin{definition}
	The \emph{downcore graph} of a staircase grid of size $n$ (i.e. $n$ rows, $n$ columns) is the undirected graph with the vertex set $\{(i, j): 1 \le i \le j \le n\}$ where $(i, j)$ refers to the box at the $i^{\text{th}}$ row from top and $j^{\text{th}}$ column from left. An edge exists between $(i, j)$ and $(k, \ell)$ if
  
\begin{itemize}
	\item $i < k \le j < \ell$ or
  \item $k < i \le \ell < j$.
\end{itemize}

We can alternatively think of this as requiring $i<k$ and $j<\ell$ or vice versa, and that $(i, \ell)$ and $(k, j)$ are boxes in the staircase grid.
\end{definition}

	A staircase encoding corresponds to a $132$-avoiding permutation if and only if the nonzero boxes in the staircase grid form an independent set in the downcore \cite{BeanTannockUlfarsson2016}.
  
  As a result, it is of interest to determine the number of distinct independent sets of size $k$ in the downcore of size $n$. If we denote this value $I(n, k),$ Bean et al. proved the following:
  
\begin{theorem} \cite{BeanTannockUlfarsson2016} \label{BeanThm}
 The generating function $$F(x, y) = \sum\limits_{n, k \in \mathbb{Z}_{\ge 0}} I(n, k) x^n y^k$$ satisfies the functional equation $$F = 1 + x \cdot F + \frac{xy \cdot F^2}{1 - y \cdot (F-1)},$$ and this means $$I(n, k) = \frac{1}{n}\sum\limits_{j=0}^{n-1} {n \choose k-j}{n \choose j+1}{n-1+j \choose n-1}.$$
\end{theorem}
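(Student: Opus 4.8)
The plan is to prove the two assertions separately: first establish the functional equation by a combinatorial decomposition of independent sets, and then extract the closed form by Lagrange inversion.

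First I would reinterpret the objects being counted. Associating to each box $(i,j)$ the interval $[i,j] = \{i, i+1, \dots, j\} \subseteq [n]$, the edge condition $i < k \le j < \ell$ (and its mirror) says exactly that the intervals $[i,j]$ and $[k,\ell]$ \emph{cross}, i.e.\ they overlap with neither containing the other; nested intervals and disjoint (even touching) intervals give no edge. Hence an independent set of the downcore of size $n$ is precisely a family of pairwise non-crossing intervals in $[n]$ (a \emph{laminar} family: any two members are nested or disjoint), and $I(n,k)$ counts such families with exactly $k$ intervals. This makes $F(x,y)$ the generating function for laminar interval families, with $x$ marking the size of the ground set and $y$ the number of intervals.

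Next I would derive the functional equation by decomposing a laminar family on $[n]$ (with $n \ge 1$) according to the point $1$. The empty ground set contributes the constant $1$. If no interval has left endpoint $1$, deleting point $1$ yields an arbitrary laminar family on the remaining $(n-1)$-set, contributing $xF$. Otherwise the intervals containing point $1$ form a chain $[1,b_1] \subsetneq \dots \subsetneq [1,b_s]$ with $s \ge 1$, and every other interval lies in exactly one of the disjoint blocks cut out by this chain: the interior $\{2,\dots,b_1\}$, the annuli $\{b_i+1,\dots,b_{i+1}\}$ for $1 \le i < s$, and the tail $\{b_s+1,\dots,n\}$. Each block carries an independent laminar family, the $s$ chain intervals contribute $y^s$, and point $1$ contributes $x$. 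The key point is that the $s-1$ annuli are \emph{nonempty} (since $b_i < b_{i+1}$), so each annulus contributes $F-1$ rather than $F$, whereas the possibly empty interior and tail each contribute $F$. Summing over $s \ge 1$ gives $\sum_{s \ge 1} x y^s F^2 (F-1)^{s-1} = \frac{xyF^2}{1-y(F-1)}$, and adding the first two cases yields the stated equation. I expect this decomposition, and in particular the recognition that the gaps between nested shells must be nonempty (which produces the denominator $1 - y(F-1)$ rather than $1 - yF$), to be the main obstacle.

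Finally I would solve for the coefficients. Writing $H = F - 1$, the equation simplifies to $H = x\,\phi(H)$ with $\phi(w) = \frac{(1+y)(1+w)}{1-yw}$, and since $H = O(x)$, Lagrange inversion gives, for $n \ge 1$, $[x^n]H = \frac1n [w^{n-1}]\phi(w)^n = \frac{(1+y)^n}{n}\,[w^{n-1}]\frac{(1+w)^n}{(1-yw)^n}$. Expanding $(1+w)^n = \sum_a \binom na w^a$ and $(1-yw)^{-n} = \sum_b \binom{n-1+b}{n-1} y^b w^b$ and extracting the coefficient of $w^{n-1}$ yields $\sum_{j=0}^{n-1}\binom{n}{j+1}\binom{n-1+j}{n-1}y^j$, using $\binom{n}{n-1-j} = \binom{n}{j+1}$. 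Multiplying by $(1+y)^n/n$, writing $(1+y)^n = \sum_c \binom nc y^c$, and reading off $[y^k]$ via $c = k-j$ then produces exactly $I(n,k) = \frac1n\sum_{j=0}^{n-1}\binom{n}{k-j}\binom{n}{j+1}\binom{n-1+j}{n-1}$, as claimed.
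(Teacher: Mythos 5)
The paper you were handed never actually proves this statement---it is imported verbatim from Bean--Tannock--Ulfarsson \cite{BeanTannockUlfarsson2016} and used as a black box---so there is no internal proof to compare yours against; what you have written is a self-contained proof of the cited result, and it is correct. Your key reinterpretation is exact: for boxes $(i,j)$ with $1 \le i \le j \le n$, the edge condition ($i<k\le j<\ell$ or $k<i\le\ell<j$) holds if and only if the intervals $[i,j]$ and $[k,\ell]$ overlap with neither containing the other, so independent sets of the staircase downcore are precisely laminar families of intervals and $I(n,k)$ counts those with $k$ members. The decomposition at the point $1$ is also sound: intervals containing $1$ are exactly those of the form $[1,b]$, hence automatically form a chain $[1,b_1]\subsetneq\cdots\subsetneq[1,b_s]$; laminarity forces every other interval into exactly one of the interior $\{2,\dots,b_1\}$, the annuli $\{b_i+1,\dots,b_{i+1}\}$, or the tail $\{b_s+1,\dots,n\}$; and your observation that the $s-1$ annuli have nonempty ground sets (giving $(F-1)^{s-1}$, hence the denominator $1-y(F-1)$) while the interior and tail may be empty (giving $F^2$) is exactly what produces the stated functional equation. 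The extraction of coefficients checks out as well: with $H=F-1$ the equation collapses to $H=x\,\frac{(1+y)(1+H)}{1-yH}$, and Lagrange inversion together with $\binom{n}{n-1-j}=\binom{n}{j+1}$ and the convolution with $(1+y)^n$ gives $I(n,k)=\frac{1}{n}\sum_{j=0}^{n-1}\binom{n}{k-j}\binom{n}{j+1}\binom{n-1+j}{n-1}$; as a sanity check, this yields $I(2,k)=\binom{3}{k}$, matching the fact that the three boxes of $B_2$ span no edges so all $2^3$ subsets are independent. Incidentally, your interval-language decomposition is the same splitting, in disguise, that this paper uses for staircase grids in the proof of Theorem \ref{thm:reverse}: the chain corresponds to the selected boxes along an extreme row or column, and your blocks to the smaller disjoint staircases they cut out.
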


We can now consider for the $132$-avoiding permutations of length $\ell$, the ones that have a staircase encoding with $k$ nonzero boxes in the grid.

\begin{proposition} \label{BeanProp}
	If we denote the number of $132$-avoiding permutations of length $\ell$ with $k$ nonzero boxes in the staircase encoding as $J(\ell, k),$ then $$J(\ell, k) = \sum\limits_{n = 0}^{\ell} I(n, k) {\ell - n - 1 \choose k-1},$$ which can be seen using the stars and bars method \cite{BeanTannockUlfarsson2016}.
\end{proposition}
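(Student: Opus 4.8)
The plan is to count the $132$-avoiding permutations of length $\ell$ directly via their staircase encodings. Combining the two preceding results---injectivity of the encoding (the Proposition) with the characterization that a filled grid is an encoding exactly when its nonzero boxes form an independent set in the downcore---the staircase encoding is a bijection between $132$-avoiding permutations and filled staircase grids whose nonzero boxes form an independent set. Under this bijection I would reduce the count to three successive choices: the size $n$ of the grid $B_n$, the positions of the $k$ nonzero boxes, and the positive integers to be placed in them.

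First I would fix $n$, which by construction is exactly the number of left-to-right minima of $\pi$ and hence the grid size. The positions of the $k$ nonzero boxes must form an independent set of size $k$ in the downcore of size $n$, so there are precisely $I(n,k)$ admissible position sets. What remains is to count the fillings of these $k$ distinguished boxes by positive integers that yield a permutation of length exactly $\ell$.

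The key step is to identify this length constraint as a single linear condition on the box entries. I would show that every entry of $\pi$ that is not a left-to-right minimum is counted by exactly one box: its value lies strictly between a unique consecutive pair of minima values (using the sentinel $\pi_{k_0}=n+1$ and that the smallest minimum has value $1$), and its position lies strictly between a unique consecutive pair of minima positions (using $k_{a+1}=n+1$). Consequently the entries of the grid sum to the number of non-minimal positions, namely $\ell-n$. Since exactly the chosen $k$ boxes are nonzero, a valid filling is precisely a way of writing $\ell-n$ as an ordered sum of $k$ positive integers, i.e. a composition into $k$ parts, of which stars and bars counts $\binom{\ell-n-1}{k-1}$.

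Finally I would multiply the $I(n,k)$ position sets by the $\binom{\ell-n-1}{k-1}$ fillings and sum over $n$; for $k\ge 1$ the binomial convention forces every term with $n>\ell-k$ to vanish, giving
$$J(\ell,k)=\sum_{n=0}^{\ell} I(n,k)\binom{\ell-n-1}{k-1}.$$
I expect the only genuine subtlety to lie in the previous paragraph, namely verifying that the boxes partition the non-minimal entries so that their total is forced to equal $\ell-n$; once this is established the bijection and the stars-and-bars count make the remainder routine.
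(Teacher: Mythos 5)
Your proposal is correct and follows exactly the argument the paper intends: the paper's entire justification is the phrase ``stars and bars'' (citing Bean--Tannock--Ulfarsson), and your write-up simply fills in the details of that same approach---the encoding bijection, the choice of grid size $n$ and independent set counted by $I(n,k)$, and the observation that the entries sum to $\ell-n$ so the fillings are compositions counted by $\binom{\ell-n-1}{k-1}$. Your identification of the partition of non-minimal entries as the one point needing verification is apt, and your handling of it (via the sentinels and the fact that $1$ is always a left-to-right minimum) is sound.
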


Bean, Tannock, and Ulfarsson then pose the following conjecture, which we in fact prove using Theorem \ref{BeanThm} and Proposition \ref{BeanProp}.

\begin{conjecture} \label{ConjEasy}\cite{BeanTannockUlfarsson2016}
	Let $C_i$ denote the $i^{\text{th}}$ Catalan number. Then, for fixed $\ell$, if we consider the largest $k$ such that $J(\ell, k) \neq 0,$ then
\begin{enumerate}
	\item if $\ell = 3i+2,$ then $J(\ell, k) = C_i,$ and \label{woy1}
	\item if $\ell = 3i+1,$ then $J(\ell, k) = \frac{3}{2} {2i \choose i}$. \label{woy2}
\end{enumerate} 
\end{conjecture}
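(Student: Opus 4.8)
The plan is to reduce each evaluation $J(\ell,k_{\max})$ to a single value $I(n,k)$ and then extract that value directly from the closed form in Theorem~\ref{BeanThm}.

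First I would determine $k_{\max}$, the largest $k$ with $J(\ell,k)\neq 0$. Since every summand in Proposition~\ref{BeanProp} is a product of nonnegative quantities, $J(\ell,k)\neq 0$ if and only if some $n$ makes both factors nonzero. From the formula for $I(n,k)$, the factor $\binom{n}{k-j}$ vanishes whenever $k-j>n$; as $j\le n-1$ this forces $I(n,k)=0$ once $k\ge 2n$, so $I(n,k)\neq 0$ requires $n\ge (k+1)/2$. The coefficient $\binom{\ell-n-1}{k-1}$ is nonzero only when $n\le \ell-k$. Hence $J(\ell,k)\neq 0$ demands an integer $n$ with $\lceil (k+1)/2\rceil \le n\le \ell-k$, which holds precisely when $k\le 2i+1$ (if $\ell=3i+2$) or $k\le 2i$ (if $\ell=3i+1$). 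In either case the feasible window for $n$ at $k=k_{\max}$ pins down the unique value $n=i+1$, so $J(3i+2,2i+1)=I(i+1,2i+1)$ and $J(3i+1,2i)=I(i+1,2i)$, since the surviving $\binom{\ell-n-1}{k-1}$ equals $\binom{2i}{2i}=1$ (resp.\ $\binom{2i-1}{2i-1}=1$).

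With $n=i+1$ the problem becomes purely the evaluation of $I(n,2n-1)$ and $I(n,2n-2)$. Here the inner sum of Theorem~\ref{BeanThm} collapses: the factor $\binom{n}{k-j}$ is nonzero only for $j\ge k-n$, so for $k=2n-1$ only $j=n-1$ survives, and for $k=2n-2$ only $j\in\{n-2,\,n-1\}$ survive. I would substitute these and simplify. For $k=2n-1$ the single term gives $I(n,2n-1)=\tfrac1n\binom{2n-2}{n-1}=C_{n-1}$, which is exactly $C_i$ when $n=i+1$, proving part~\ref{woy1}. For $k=2n-2$ the two terms give $I(n,2n-2)=\binom{2n-3}{n-1}+\binom{2n-2}{n-1}$; using $\binom{2n-3}{n-1}=\binom{2n-3}{n-2}$ together with Pascal's rule yields $\binom{2n-2}{n-1}=2\binom{2n-3}{n-1}$, so $I(n,2n-2)=\tfrac32\binom{2n-2}{n-1}$, which is $\tfrac32\binom{2i}{i}$ when $n=i+1$, proving part~\ref{woy2}.

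The only genuinely delicate step is the first one: correctly identifying $k_{\max}$ and verifying that the summation window for $n$ degenerates to a single index, since an off-by-one slip in the inequalities $n\ge (k+1)/2$ and $n\le \ell-k$ would shift $k_{\max}$ or introduce spurious terms. Everything after that is a mechanical collapse of the hypergeometric-type sum followed by elementary binomial identities, and the small cases ($\ell=4,5,7,8$) can be checked by hand against the downcore on $n\le 4$ vertices as a sanity check.
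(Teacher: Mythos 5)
Your proposal is correct and follows essentially the same route as the paper: bound $n \ge (k+1)/2$ from the vanishing of $\binom{n}{k-j}$, combine with $n \le \ell-k$ to collapse the outer sum to the single index $n = i+1$, then collapse the inner sum over $j$ to one term (for $k = 2i+1$) or two terms (for $k = 2i$) and simplify the binomials. The only cosmetic differences are that you evaluate $I(n,2n-1)$ and $I(n,2n-2)$ for general $n$ and invoke Pascal's rule explicitly, whereas the paper substitutes $n = i+1$ at the outset; the substance is identical.
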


After proving this conjecture, we will also determine $J(\ell, k)$ the largest $k$ such that $J(\ell, k) \neq 0$, when $\ell = 3i$.

We begin by proving the following.

\begin{proposition} \label{prop27}
	For all $\ell \ge 1,$ the largest $k$ such that $J(\ell, k) \neq 0$ is $\lfloor\frac{2\ell-1}{3}\rfloor.$
\end{proposition}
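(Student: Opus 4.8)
The plan is to establish the claim in two halves: first that $J(\ell, k) = 0$ whenever $k > \lfloor (2\ell-1)/3 \rfloor$, and second that $J(\ell, \lfloor (2\ell-1)/3 \rfloor) \neq 0$. By Proposition 2.6, $J(\ell, k) = \sum_{n=0}^{\ell} I(n,k) \binom{\ell - n - 1}{k-1}$, so $J(\ell, k) \neq 0$ precisely when there is some $n \le \ell$ with $I(n,k) \neq 0$ and $\binom{\ell - n - 1}{k-1} \neq 0$. The latter binomial is nonzero exactly when $\ell - n - 1 \ge k - 1$, i.e. $n \le \ell - k$. So everything reduces to understanding, for each $k$, the smallest $n$ for which $I(n,k) \neq 0$ — call it $n_{\min}(k)$ — and checking that $n_{\min}(k) \le \ell - k$ is solvable in $\ell$ precisely up to the claimed bound.

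First I would pin down $n_{\min}(k)$, the minimum grid size that admits an independent set of size $k$ in the downcore. This is the combinatorial heart of the argument. An independent set corresponds to a collection of boxes $(i,j)$ with $i \le j$, no two of which are joined by a downcore edge; the edge condition says $(i,j)$ and $(k,\ell)$ conflict when one strictly nests inside the other in the sense $i < k \le j < \ell$. I would argue that a maximum independent set in the size-$n$ downcore has size roughly $\lceil (2n+1)/3 \rceil$ or equivalently that the minimum $n$ supporting $k$ independent boxes is $n_{\min}(k) = \lceil (3k-1)/2 \rceil$. The cleanest route is probably a direct extremal construction: exhibit an explicit independent set of size $k$ living in a grid of size $\lceil (3k-1)/2 \rceil$ (e.g. a staircase-diagonal pattern of boxes spaced so consecutive boxes avoid the nesting relation), and then prove a matching lower bound showing no smaller grid works, by an averaging or interval-counting argument on the row/column spans the boxes occupy.

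Once $n_{\min}(k)$ is known, the rest is bookkeeping. The condition $J(\ell,k) \neq 0$ becomes $n_{\min}(k) \le \ell - k$, i.e. $\ell \ge k + n_{\min}(k)$. With $n_{\min}(k) = \lceil (3k-1)/2 \rceil$ this reads $\ell \ge k + \lceil (3k-1)/2 \rceil$, and I would invert this inequality to solve for the largest admissible $k$ as a function of $\ell$, checking the floor/ceiling arithmetic in the three residue classes $\ell \equiv 0, 1, 2 \pmod 3$ separately to confirm the answer collapses to $\lfloor (2\ell-1)/3 \rfloor$ in all cases. I would also note that $I(n,k) > 0$ for \emph{all} $n \ge n_{\min}(k)$ (independent sets only get easier to embed in larger grids), so that once a valid $n$ exists it can be chosen to also satisfy the binomial constraint, and no cancellation issue arises since all summands in Proposition 2.6 are nonnegative.

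The main obstacle will be the lower bound in the extremal step — proving that $k$ pairwise non-nesting boxes genuinely require a grid of size at least $\lceil (3k-1)/2 \rceil$, rather than fewer. The nesting relation is subtle because it pairs a row index of one box with the column index of another, so a naive projection onto rows or columns loses information; I expect to need a careful charging argument that each independent box forces a bounded amount of "fresh" horizontal and vertical extent, summing to the $3k/2$ growth rate. Verifying the residue-class arithmetic against the known values in Conjecture 2.5 (which forces the bound to be $\lfloor (2\ell-1)/3 \rfloor$ consistently with the $\ell = 3i+1$ and $\ell = 3i+2$ cases) provides a useful sanity check on the constant.
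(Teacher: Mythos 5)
Your overall skeleton is sound---reduce the question to $n_{\min}(k)$, the least $n$ with $I(n,k)\neq 0$, observe that all summands in Proposition \ref{BeanProp} are nonnegative so no cancellation can occur, and then combine $n\ge n_{\min}(k)$ with the binomial constraint $n\le \ell-k$---but your central extremal claim is false, and the proof collapses with it. You assert that the maximum independent set in the size-$n$ staircase downcore has size roughly $\lceil (2n+1)/3\rceil$, equivalently that $n_{\min}(k)=\lceil (3k-1)/2\rceil$. In fact the maximum independent set has size $2n-1$, so $n_{\min}(k)=\lceil (k+1)/2\rceil$. Concretely: the staircase grid of size $2$ has three boxes and its downcore has no edges at all, so $I(2,3)=1>0$, whereas your formula would require a grid of size $\lceil 8/2\rceil=4$ to support three independent boxes. (The paper itself proves in Theorem \ref{thm:reverse} that every maximal independent set in the size-$n$ staircase downcore has size exactly $2n-1$; that theorem is stated for the reflected staircase, whose downcore is isomorphic to the one in Section 2.) Consequently the charging-argument lower bound that you flag as the ``main obstacle'' cannot exist, and your bookkeeping cannot give the claimed answer either: substituting $n_{\min}(k)=\lceil(3k-1)/2\rceil$ into $\ell\ge k+n_{\min}(k)$ yields $k\le(2\ell+1)/5$ asymptotically, not $\lfloor(2\ell-1)/3\rfloor$; carrying out the residue-class arithmetic you defer would have exposed this inconsistency immediately.

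The repair is much lighter than your plan suggests: no geometric extremal construction is needed, because positivity of $I(n,k)$ can be read directly off the closed formula in Theorem \ref{BeanThm}. Every summand there is nonnegative, and the $j$-th summand is positive if and only if $\binom{n}{k-j}$ and $\binom{n}{j+1}$ are nonzero, i.e.\ if and only if $k-n\le j\le k$ and $0\le j\le n-1$; such a $j$ exists precisely when $2n\ge k+1$. This is exactly the paper's argument: it adds the inequalities $n\ge k-j$, $n\ge j+1$, and twice $\ell-n\ge k$ to obtain $2\ell\ge 3k+1$, i.e.\ $k\le\lfloor(2\ell-1)/3\rfloor$. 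Note that the paper's proof only establishes this upper bound; attainability of $k=\lfloor(2\ell-1)/3\rfloor$ is witnessed by the explicit positive values computed in Theorems \ref{EasyPart1}, \ref{EasyPart2}, and the $\ell=3i$ theorem following them. Your instinct to prove attainability inside the proposition itself is reasonable, but it should be done by exhibiting a valid pair $(n,j)$ in the formula (for instance $n=\lceil(k+1)/2\rceil$ with $j$ chosen in the admissible range), not by a construction calibrated to the wrong constant.
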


\begin{proof}
  For $J(\ell, k)$ to be positive, there must exist some $n$ such that $I(n, k)$ and ${\ell - n - 1 \choose k - 1}$ are positive. In other words, we must have that $\ell - n \ge k$ and that $$\sum\limits_{j = 0}^{n-1} {n \choose k-j}{n \choose j+1}{n-1+j \choose n-1}$$ are both positive for some $n$. But the sum being positive means that there is some $j$ such that $n \ge k - j$ and $n \ge j + 1$ as well as $\ell - n \ge k.$ Adding the first two equations plus twice the third tells us that $2\ell \ge 3k+1.$ Basic manipulation gives $k \le \frac{2\ell-1}{3}$.
  
  Specifically, if $\ell = 3i+2,$ then $k \le 2i+1$; if $\ell = 3i+1,$ then $k \le 2i$; and if $\ell = 3i,$ then $k \le 2i-1.$
\end{proof}

We next prove part \eqref{woy1} of Conjecture \ref{ConjEasy}.
\begin{theorem} \label{EasyPart1}
	We have that $J(3i+2, 2i+1)$ is positive and equals $C_i,$ the $i^{\text{th}}$ Catalan number.
\end{theorem}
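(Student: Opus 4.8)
The plan is to exploit that the pair $(\ell,k)=(3i+2,\,2i+1)$ saturates the bound of Proposition~\ref{prop27}: here $2\ell = 6i+4 = 3k+1$, so the inequality $2\ell\ge 3k+1$ established there is in fact an equality. Using the formula for $J$ from Proposition~\ref{BeanProp} together with the formula for $I$ from Theorem~\ref{BeanThm}, I would write $J(3i+2,2i+1)$ as a double sum over $n$ (the outer index) and $j$ (the inner index of $I$), and argue that only one pair $(n,j)$ can contribute a nonzero summand. Recall that a summand is nonzero only when $\binom{\ell-n-1}{k-1}$, $\binom{n}{k-j}$, and $\binom{n}{j+1}$ are all positive, which forces $\ell-n\ge k$, $n\ge k-j$, and $n\ge j+1$. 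Taking twice the first inequality plus the second plus the third yields $2\ell = 2(\ell-n)+n+n \ge 2k+(k-j)+(j+1)=3k+1=2\ell$, so every inequality in this chain must be tight.

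From tightness I would read off $n=\ell-k=i+1$, together with $k-j=n$ and $j+1=n$, hence $j=i$. Thus the only candidate summand is $(n,j)=(i+1,i)$, and at $n=i+1$ the outer factor is $\binom{\ell-n-1}{k-1}=\binom{2i}{2i}=1$. It then remains to confirm that this single term is genuinely nonzero and to evaluate it, which reduces the whole quantity to $J(3i+2,2i+1)=I(i+1,2i+1)$.

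Finally I would compute $I(i+1,2i+1)$ directly. In its defining sum $\frac{1}{i+1}\sum_{j=0}^{i}\binom{i+1}{2i+1-j}\binom{i+1}{j+1}\binom{i+j}{i}$ the factor $\binom{i+1}{2i+1-j}$ vanishes unless $2i+1-j\le i+1$, i.e. $j\ge i$; combined with the summation bound $j\le i$ this again isolates the single term $j=i$, giving $\frac{1}{i+1}\binom{i+1}{i+1}\binom{i+1}{i+1}\binom{2i}{i}=\frac{1}{i+1}\binom{2i}{i}=C_i$, as desired.

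I do not anticipate a substantive obstacle: the entire argument is a support analysis of binomial coefficients, and the essential idea is simply that saturating the inequality $2\ell\ge 3k+1$ collapses both the outer sum and the inner sum to a single surviving term each. The only place demanding genuine care is the bookkeeping of the binomial supports—getting the direction of each inequality right and checking that no boundary term is accidentally retained—so I would verify the edge cases $n=i+1$ and $j=i$ explicitly rather than rely on the aggregate inequality $2\ell\ge 3k+1$ alone.
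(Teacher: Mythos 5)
Your proposal is correct and follows essentially the same route as the paper: both collapse the outer sum to $n=i+1$ and the inner sum to $j=i$ by analyzing when the binomial coefficients are nonzero, reducing everything to $\frac{1}{i+1}\binom{2i}{i}=C_i$. Your packaging of the support analysis as tightness in the aggregated inequality $2\ell\ge 3k+1$ is a minor stylistic variant of the paper's two-step bounding of $n$ (via $2n\ge k+1$ and $3i+1-n\ge 2i$), not a different argument.
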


\begin{proof}
	Note that if $I(n, k)$ is positive, there must exist some $j$ such that $n \ge k - j$ and $n \ge j+1.$ Thus, $2n \ge k+1,$ which means
\begin{align*}
    J(3i+2, 2i+1) &= \sum\limits_{n = i+1}^{3i+2} I(n, 2i+1){(3i+2) - n - 1 \choose (2i+1) - 1} \\
    &= \sum\limits_{n = i+1}^{3i+2} I(n, 2i+1){3i+1-n \choose 2i}.
\end{align*}
  But the summands are only positive if $3i+1 - n \ge 2i,$ or if $n \le i+1.$ Thus, this summation in fact equals 
\begin{align*}
	I(i+1, 2i+1) {2i \choose 2i} &= I(i+1, 2i+1) \\
    &= \frac{1}{i+1} \sum\limits_{j = 0}^{i} {i+1 \choose 2i+1-j} {i+1 \choose j+1} {i+j \choose i}.
\end{align*} 
    As noted earlier, the summands are positive only if $i+1 \ge 2i+1 - j$ and $i+1 \ge j+1,$ i.e. if $j \ge i$ and $i \ge j$, which gives $i = j.$ Thus, this becomes 
\[\frac{1}{i+1} {i+1 \choose i+1}{i+1 \choose i+1}{2i \choose i} = \frac{1}{i+1}{2i \choose i} = C_i. \qedhere\]
\end{proof}

Next, we prove part 2 of Conjecture \ref{ConjEasy}.

\begin{theorem} \label{EasyPart2}
	We have that $J(3i+1, 2i)$ is positive and equals $\frac{3}{2} {2i \choose i}.$
\end{theorem}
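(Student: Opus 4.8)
The plan is to follow the same reduction used in the proof of Theorem \ref{EasyPart1}, collapsing the sum defining $J$ to a single term and then evaluating the resulting instance of $I$. First I would argue that only one value of $n$ contributes. By the observation that $I(n,k)>0$ forces $2n \ge k+1$ (since some $j$ must satisfy both $n \ge k-j$ and $n \ge j+1$), positivity of $I(n,2i)$ requires $n \ge i+1$; meanwhile $\binom{(3i+1)-n-1}{2i-1}$ is nonzero only when $n \le \ell - k = i+1$. Hence $n = i+1$ is the unique surviving index, and since $\binom{(3i+1)-(i+1)-1}{2i-1} = \binom{2i-1}{2i-1} = 1$, we obtain $J(3i+1,2i) = I(i+1,2i)$.

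Next I would expand $I(i+1,2i) = \frac{1}{i+1}\sum_{j=0}^{i}\binom{i+1}{2i-j}\binom{i+1}{j+1}\binom{i+j}{i}$ and determine which terms survive. The factor $\binom{i+1}{2i-j}$ is nonzero only when $2i-j \le i+1$, i.e. $j \ge i-1$, so combined with the summation bound $j \le i$ only $j = i-1$ and $j = i$ remain. This is the one structural difference from Theorem \ref{EasyPart1}, where a single term survived: here two terms survive, and the point of interest is that they combine cleanly.

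Computing the two terms: at $j = i$ the summand is $\binom{i+1}{i}\binom{i+1}{i+1}\binom{2i}{i} = (i+1)\binom{2i}{i}$, and at $j = i-1$ it is $\binom{i+1}{i+1}\binom{i+1}{i}\binom{2i-1}{i} = (i+1)\binom{2i-1}{i}$. Dividing by $i+1$ gives $I(i+1,2i) = \binom{2i}{i} + \binom{2i-1}{i}$.

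Finally I would invoke the identity $\binom{2i-1}{i} = \tfrac12\binom{2i}{i}$, which follows from Pascal's rule $\binom{2i}{i} = \binom{2i-1}{i-1} + \binom{2i-1}{i}$ together with the symmetry $\binom{2i-1}{i-1} = \binom{2i-1}{i}$. This yields $J(3i+1,2i) = \binom{2i}{i} + \tfrac12\binom{2i}{i} = \tfrac32\binom{2i}{i}$, as claimed. I do not anticipate a genuine obstacle; the only points requiring care are the bookkeeping of which $(n,j)$ pairs contribute and the recognition that the two surviving terms must be merged via this half-central-binomial identity, rather than collapsing to a single Catalan-type term as in the previous theorem.
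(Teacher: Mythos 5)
Your proof is correct and matches the paper's argument essentially step for step: both reduce the sum to the single index $n=i+1$, observe that only $j=i-1$ and $j=i$ survive in $I(i+1,2i)$, and combine $\binom{2i-1}{i}+\binom{2i}{i}=\tfrac32\binom{2i}{i}$. No gaps; your write-up is if anything slightly more explicit about the half-central-binomial identity than the paper, which states the final simplification without justification.
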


\begin{proof}
	Again, we must have for a nonzero summand that $2n \ge k+1 = 2i+1$ but that $(3i+1) - n -1 \ge k-1,$ so $n \ge i+1, n \le i+1$ or $n = i+1.$ Thus, $$J(3i+2, 2i) = I(i+1, 2i) {2i-1 \choose 2i-1} = I(i+1, 2i).$$
  
  Now, we have the summands in $I(i+1, 2i)$ are positive only if $i+1 \ge 2i - j$ and $i+1 \ge j+1.$ Thus, $i \ge j \ge i-1.$ In other words, 
\begin{align*}
	I(i+1, 2i) &= \frac{1}{i+1} {i+1 \choose i+1} {i+1 \choose i}{i+(i-1) \choose i} + \frac{1}{i+1}{i+1 \choose i}{i+1 \choose i+1} {i+i \choose i} \\
  &= {2i-1 \choose i} + {2i \choose i} \\
  &= \frac{3}{2} {2i \choose i}
\end{align*}
as desired.
\end{proof}

Finally, we also look at the case of $\ell \equiv 0 \Mod 3,$ and determine $J(3i, 2i-1),$ since $J(3i, 2i-1) \neq 0$ but $J(3i, 2i) = 0$ by Proposition \ref{prop27}.

\begin{theorem}
    We have that $J(3i, 2i-1) = 4$ if $i = 1$ and $J(3i, 2i-1) = {2i-2 \choose i} \cdot \frac{9i^2+2i-4}{2(i-1)}$ if $i \ge 2$.
\end{theorem}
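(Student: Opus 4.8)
The plan is to mirror the approach of Theorems \ref{EasyPart1} and \ref{EasyPart2}: first pin down the finitely many values of $n$ that contribute to the sum defining $J(3i, 2i-1)$, then evaluate the surviving $I(n, 2i-1)$ by discarding all but a handful of terms in the inner sum, and finally collapse everything to a single binomial coefficient times a rational function of $i$.

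First I would determine the support. By the argument in Proposition \ref{prop27}, a summand $I(n, 2i-1){3i - n - 1 \choose 2i-2}$ is nonzero only when $I(n, 2i-1) \neq 0$, which forces $2n \ge k + 1 = 2i$, i.e.\ $n \ge i$, and simultaneously ${3i-n-1 \choose 2i-2} \neq 0$, i.e.\ $3i - n - 1 \ge 2i - 2$, i.e.\ $n \le i + 1$. Hence only $n = i$ and $n = i+1$ survive, and since ${2i-1 \choose 2i-2} = 2i-1$ and ${2i-2 \choose 2i-2} = 1$, we obtain
\[ J(3i, 2i-1) = (2i-1)\,I(i, 2i-1) + I(i+1, 2i-1). \]

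Next I would evaluate the two coefficients using the same ``only extreme $j$ survive'' device. In $I(i, 2i-1)$ the constraints $i \ge 2i-1-j$ and $i \ge j+1$ collapse to the single index $j = i-1$, giving $I(i, 2i-1) = \frac{1}{i}{2i-2 \choose i-1}$. In $I(i+1, 2i-1)$ the constraints $i+1 \ge 2i-1-j$ and $i+1 \ge j+1$ leave $j \in \{i-2, i-1, i\}$, and summing those three terms yields
\[ I(i+1, 2i-1) = \tfrac{i}{2}{2i-2 \choose i} + (i+1){2i-1 \choose i} + \tfrac{i}{2}{2i \choose i}. \]

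Finally I would reduce everything to a common base. Using ${2i-2 \choose i-1} = \tfrac{i}{i-1}{2i-2 \choose i}$, then Pascal's rule and symmetry to write ${2i-1 \choose i} = \tfrac{2i-1}{i-1}{2i-2 \choose i}$ and ${2i \choose i} = 2{2i-1 \choose i}$, I can factor out ${2i-2 \choose i}$ and collect the resulting rational coefficients over the common denominator $2(i-1)$. The main obstacle is purely this bookkeeping: the output is a quadratic in $i$ over $2(i-1)$, and I would guard against an arithmetic slip by testing the formula against the directly computed values $J(6,3) = 19$ and $J(9,5) = 86$. (Carrying out the combination and checking these cases in fact produces the numerator $9i^2 + 3i - 4$ rather than $9i^2 + 2i - 4$, so I would flag the printed coefficient as a likely typo.) The case $i = 1$ must be treated separately, since the denominator $2(i-1)$ vanishes and ${2i-2 \choose i} = {0 \choose 1} = 0$; there one simply computes $J(3,1) = (2\cdot 1 - 1)\,I(1,1) + I(2,1) = 1 + 3 = 4$ directly from the definitions.
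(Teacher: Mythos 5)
Your proposal is correct and takes essentially the same approach as the paper: restrict the outer sum to $n \in \{i, i+1\}$ to obtain $J(3i,2i-1) = (2i-1)\,I(i,2i-1) + I(i+1,2i-1)$, restrict the inner sums to $j = i-1$ and $j \in \{i-2, i-1, i\}$ respectively, and collect everything over the common base $\binom{2i-2}{i}$. Your typo flag is also vindicated: the paper's own proof arrives at the numerator $9i^2+3i-4$ (and remarks that this matches Table 2 of Bean, Tannock, and Ulfarsson for $i = 2, 3, 4$), so the $9i^2+2i-4$ in the theorem statement is a misprint, exactly as your check values $J(6,3)=19$ and $J(9,5)=86$ indicate.
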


\begin{proof}
    For $i = 1,$ it can be checked from \cite[Table 2]{BeanTannockUlfarsson2016}, so we assume $i \ge 2$. Again, for $I(n, k)$ to be positive, we must have $2n \ge k+1,$ so
\[J(3i, 2i-1) = \sum\limits_{n = i}^{3i} I(n, 2i-1) {3i-n-1 \choose 2i-2}.\]
    As ${3i-n-1 \choose 2i-2}$ is only positive when $3i-n-1 \ge 2i-2,$ or equivalently when $n \le i+1,$ we get
\[J(3i, 2i-1) = \sum\limits_{n = i}^{i+1} I(n, 2i-1) {3i-n-1 \choose 2i-2} = (2i-1) I(i, 2i-1) + I(i+1, 2i-1).\]
    Now, the summation for $I(i, 2i-1)$ is positive only when $i \ge (2i-1)-j$ and $i \ge j+1,$ so we must have $j = i-1.$ The summation for $I(i+1, 2i-1)$ is positive only when $i+1 \ge (2i-1)-j$ and $i+1 \ge j+1,$ so we must have $j \in \{i-2, i-1, i\}$. Therefore, we have
\begin{align*}
    J(3i, 2i-1) &= (2i-1) I(i, 2i-1) + I(i+1, 2i-1) \\
    &= \frac{2i-1}{i} {2i-2 \choose i-1} + \frac{1}{i+1} \left({i+1 \choose i-1} {2i-2 \choose i} + (i+1)^2 {2i-1 \choose i} + {i+1 \choose i-1} {2i \choose i}\right)
\end{align*}
    Using simple formulas to compute the ratios of ${2i-2 \choose i-1}, {2i-1 \choose i},$ and ${2i \choose i}$ to ${2i-2 \choose i},$ we can write this as
\begin{align*}
    &\hspace{0.5cm} {2i-2 \choose i} \left(\frac{2i-1}{i} \cdot \frac{i}{i-1} + \frac{i}{2} + (i+1) \cdot \frac{(2i-1)}{i-1} + \frac{i}{2} \cdot \frac{2(2i-1)}{i-1} \right) \\
    &= {2i-2 \choose i} \left(\frac{2i-1}{i-1} + \frac{i}{2} + \frac{(2i-1)(i+1)}{i-1} + \frac{i(2i-1)}{i-1} \right) \\
    &= {2i-2 \choose i} \left(\frac{9i^2+3i-4}{2(i-1)}\right).
\end{align*}
    We note that this formula matches \cite[Table 2]{BeanTannockUlfarsson2016} for $i = 2, 3, 4.$
\end{proof}

\begin{remark}
    Note that our above proof indeed requires $i \ge 2,$ since we sum over $j \in \{i-2, i-1, i\}$ and use ratios that do not make sense if $i = 1.$
\end{remark}

\section{Second Conjecture}

First, we note that left-to-right minima and right-to-left maxima are especially important in the study of $123$-avoiding permutations for the following reason:

\begin{proposition} \label{prop:123}
	A permutation $\pi \in S_n$ is $123$-avoiding if and only if $\pi_i$ is a left-to-right minimum or a right-to-left maximum for every $1 \le i \le k$.
\end{proposition}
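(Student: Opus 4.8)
The plan is to prove the logically equivalent contrapositive statement: $\pi$ \emph{contains} $123$ if and only if there exists some index $i$ for which $\pi_i$ is neither a left-to-right minimum nor a right-to-left maximum. Establishing this equivalence immediately yields the proposition by negating both sides, since ``every $\pi_i$ is a left-to-right minimum or a right-to-left maximum'' is exactly the negation of ``some $\pi_i$ is neither.''

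For the first direction, I would suppose $\pi$ contains a $123$ pattern, so that there are positions $a < b < c$ with $\pi_a < \pi_b < \pi_c$, and then focus attention on the \emph{middle} element $\pi_b$. Because $a < b$ and $\pi_a < \pi_b$, the element $\pi_b$ violates the defining condition of a left-to-right minimum (which demands $\pi_j > \pi_b$ for every $j < b$); because $c > b$ and $\pi_c > \pi_b$, it likewise violates the defining condition of a right-to-left maximum. Hence $\pi_b$ is neither, which is what we want.

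For the converse, I would suppose some $\pi_i$ is neither a left-to-right minimum nor a right-to-left maximum and extract the two witnesses the definitions guarantee. Failing to be a left-to-right minimum yields an index $j < i$ with $\pi_j < \pi_i$, and failing to be a right-to-left maximum yields an index $k > i$ with $\pi_k > \pi_i$. Then $j < i < k$ together with $\pi_j < \pi_i < \pi_k$ is by definition an occurrence of $123$, so $\pi$ contains the pattern.

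I do not expect any serious obstacle: the argument is entirely elementary and symmetric between the two directions. The only point that requires genuine care is correctly unwinding the definitions of left-to-right minimum and right-to-left maximum, so that ``not a minimum'' and ``not a maximum'' produce precisely the smaller-and-earlier and larger-and-later witnesses needed to identify the first and third entries of a $123$ pattern sandwiching the given element.
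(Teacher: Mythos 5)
Your proof is correct and follows essentially the same route as the paper's: both directions rest on the same two observations (the middle element of a $123$ occurrence is neither a left-to-right minimum nor a right-to-left maximum, and conversely an element that is neither yields witnesses forming a $123$ pattern). The only cosmetic difference is that you package the argument as a single contrapositive equivalence, whereas the paper argues each implication directly.
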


\begin{proof}
	If $\pi$ is $123$-avoiding and $\pi_i$ is not a left-to-right minimum, there exists $j < i$ such that $\pi_j < \pi_i$. Also, if $\pi_i$ is not a a right-to-left maximum, there exists $k > i$ such that $\pi_k > \pi_i$. Thus, $i < j < k$ and $\pi_i < \pi_j < \pi_k,$ so $\pi$ contains $123$.
	
	To prove the converse, if $\pi$ contains $123,$ then there exist $i < j < k$ such that $\pi_i < \pi_j < \pi_k.$ Clearly, $\pi_j$ not a left-to-right minimum as $\pi_i < \pi_j$ and $i < j,$ and $\pi_j$ is not a right-to-left maximum as $\pi_j < \pi_k$ and $j < k$.
\end{proof}

Next, similarly to the staircase grid, we now consider the related \emph{boundary grid}, specifically for $123$-avoiding permutations.

\begin{definition}
	Given a $123$-avoiding permutation $\pi$ of length $n$, define the \emph{boundary grid} of $\pi$ as a subset of an $(n-1) \times (n-1)$ grid of boxes such that if we label the vertices of the boxes from $(1, 1)$ to $(n, n)$ in a standard Cartesian coordinate way, a box is in the boundary grid if and only if it is above and to the right of at least one left-to-right minimum and below and to the left of at least one right-to-left maximum. See Figure \ref{DowncoreGraph} for an example.
\end{definition}

We also define a \textit{Young diagram} and a \textit{skew Young diagram}. We remark that all boundary grids are skew Young diagrams.

\begin{definition}
    A \textit{Young diagram} is a finite, connected collection of boxes such that there exists a bottom-left most box that we label as $(1, 1)$. Moreover, if the box $(k, \ell)$ (which is in the $k^{\text{th}}$ column from left and $\ell^{\text{th}}$ row from bottom) in the Young diagram, then every box $(k', \ell')$ with $1 \le k' \le k, 1 \le \ell' \le \ell$ is also in the diagram.
\end{definition}

\begin{definition}
    A \textit{skew Young diagram} is a finite collection of boxes such that there exists a Young diagram $\lambda$ and another Young diagram $\mu \subset \lambda$ with the same bottom-left most box, such that a box is in the skew Young diagram if and only if it is in $\lambda$ but not in $\mu$.
\end{definition}

\begin{remark}
    When dealing with a boundary grid of a permutation $\pi$ or any skew Young diagram generally, we will always use the convention of $(x, y)$ to denote the box in the $x^{\text{th}}$ column from left and $y^{\text{th}}$ row from bottom. This way, we correspond with the location of $(i, \pi_i)$.
\end{remark}

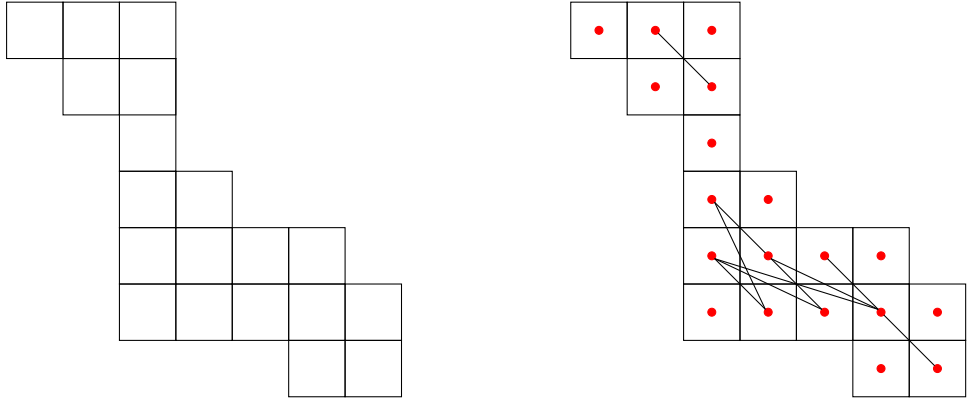
\begin{figure}
\centering
\begin{tikzpicture}[scale=0.75]
\foreach \i in {1,...,3} {
  \draw (\i - 10, 7) rectangle (\i - 9, 8);
  \draw (\i, 7) rectangle (\i + 1, 8);
  \filldraw [red] (\i + 0.5, 7.5) circle (2pt);
}
\foreach \i in {2,3} {
  \draw (\i - 10, 6) rectangle (\i - 9, 7);
  \draw (\i, 6) rectangle (\i + 1, 7);
  \filldraw [red] (\i + 0.5, 6.5) circle (2pt);
}
\foreach \i in {3} {
  \draw (\i - 10, 5) rectangle (\i - 9, 7);
  \draw (\i, 5) rectangle (\i + 1, 6);
  \filldraw [red] (\i + 0.5, 5.5) circle (2pt);
}
\foreach \i in {3, 4} {
  \draw (\i - 10, 4) rectangle (\i - 9, 5);
  \draw (\i, 4) rectangle (\i + 1, 5);
  \filldraw [red] (\i + 0.5, 4.5) circle (2pt);
}
\foreach \i in {3,...,6} {
  \draw (\i - 10, 3) rectangle (\i - 9, 4);
  \draw (\i, 3) rectangle (\i + 1, 4);
  \filldraw [red] (\i + 0.5, 3.5) circle (2pt);
}
\foreach \i in {3,...,7} {
  \draw (\i - 10, 2) rectangle (\i - 9, 3);
  \draw (\i, 2) rectangle (\i + 1, 3);
  \filldraw [red] (\i + 0.5, 2.5) circle (2pt);
}
\foreach \i in {6,7} {
  \draw (\i - 10, 1) rectangle (\i - 9, 2);
  \draw (\i, 1) rectangle (\i + 1, 2);
  \filldraw [red] (\i + 0.5, 1.5) circle (2pt);
}

\draw (2.55, 7.45) -- (3.45, 6.55);
\draw (3.55, 4.45) -- (4.45, 3.55);
\draw (3.55, 4.45) -- (4.45, 2.55);
\draw (3.55, 3.45) -- (4.45, 2.55);
\draw (3.55, 3.45) -- (5.45, 2.55);
\draw (3.55, 3.45) -- (6.45, 2.55);
\draw (4.55, 3.45) -- (5.45, 2.55);
\draw (4.55, 3.45) -- (6.45, 2.55);
\draw (5.55, 3.45) -- (6.45, 2.55);
\draw (6.55, 2.45) -- (7.45, 1.55);
\end{tikzpicture}
\caption{On the left, we see the boundary grid for the permutation $\pi = 76285143$. On the right, we see the downcore graph for the same permutation $\pi$, with the vertices as the centers of each box.}
\label{DowncoreGraph}
\end{figure}

	Note that the boundary grid of a $123$-avoiding permutation isn't always connected. In fact, one can see that the boundary grid is not connected if and only if the permutation $\pi$ can be written as $\pi_1 \pi_2 \cdots \pi_k \pi_{k+1} \cdots \pi_n$ for some $1 \le k \le n-1$ such that $\pi_i > \pi_j$ for all $i \le k, j > k$ \cite{BeanTannockUlfarsson2016}. In this case, we call the permutation \textit{skew-decomposable}, and otherwise call the permutation \textit{skew-indecomposable}. One can also see that any connected boundary grid (or any connected component of a boundary grid) has the shape of a skew Young diagram.
  
  Similar to the downcore on the staircase grid, we can define the \emph{downcore graph} on any skew Young diagram, and therefore on any boundary grid.
  
\begin{definition}
	The \emph{downcore graph} on a skew Young diagram is a graph with the boxes of the diagram as vertices. If $(x, y)$ denotes the box in the $x^{\text{th}}$ column from left and $y^{\text{th}}$ row from bottom (to correspond with the Cartesian coordinates), there is an edge between $(i, j)$ and $(k, \ell)$ if the following two conditions are met:
  
\begin{enumerate}
	\item $i < k, j > \ell$ or $i > k, j < \ell$
	\item $(i, \ell), (k, j)$ are both in the skew Young diagram.
\end{enumerate}

	The second condition is equivalent to the rectangle created with corner boxes $(i, j), (i, \ell), (k, j),$ and $(k, \ell)$ being entirely contained within the boundary grid. See Figure \ref{DowncoreGraph} for an example of a downcore graph on a boundary grid.
\end{definition}

\begin{definition}
	We define a graph to be \emph{pure} if every maximal independent set has the same size, i.e. the size of every maximal clique in the complement graph is the same. We remark that this is closely related to the definition of a pure simplicial complex.
\end{definition}

Bean et al. make the following conjecture.

\begin{conjecture} \label{Conjecture2}
	The downcore graph of the boundary grid of a $123$-avoiding permutation $\pi$ is pure if and only if $\pi$ avoids $2143$.
\end{conjecture}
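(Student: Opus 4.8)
The plan is to translate everything into the language of the two decreasing subsequences supplied by Proposition~\ref{prop:123} and then to read off purity from the poset structure hidden inside the downcore. By Proposition~\ref{prop:123} I write $\pi$ as an interleaving of its left-to-right minima (positions increasing, values decreasing) and its right-to-left maxima (positions increasing, values decreasing). The first observation is that each \emph{compatible pair}, a left-to-right minimum $L=(i,\pi_i)$ and a right-to-left maximum $M=(i',\pi_{i'})$ with $i<i'$ and $\pi_i<\pi_{i'}$, contributes the full axis-aligned rectangle of boxes $[i,i'-1]\times[\pi_i,\pi_{i'}-1]$ to the boundary grid, and the grid is exactly the union of these rectangles (a box lies above-right of some $L$ and below-left of some $M$ precisely when it lies in the rectangle of the compatible pair $(L,M)$). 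The central lemma I would prove first is a pattern-translation: a $123$-avoiding $\pi$ contains $2143$ if and only if there exist two left-to-right minima and two right-to-left maxima that are pairwise compatible, i.e.\ both minima lie to the left of and below both maxima. The forcing of roles is the content here: in any $2143$ occurrence inside a $123$-avoiding permutation, the ``$2$'' and ``$1$'' must be left-to-right minima and the ``$4$'' and ``$3$'' must be right-to-left maxima, and the cross-condition reduces to the largest minimum value lying below the smallest maximum value while the later minimum sits left of the earlier maximum.

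Next I record the structural meaning of the downcore. Two boxes are adjacent only when one is strictly to the northwest of the other (smaller column, larger row) \emph{and} the rectangle they span lies entirely in the grid; every other pair, namely same row, same column, northeast–southwest, or northwest–southeast with a non-fitting spanning rectangle, is a non-edge. Consequently, on a \emph{full} rectangle the downcore is precisely the incomparability graph of the product-of-chains poset on its boxes, so maximal independent sets are maximal chains, all of length (width $+$ height $-1$), and the graph is pure. Purity can therefore only fail at a ``hole,'' where a spanning rectangle is blocked; the whole problem is to decide when such holes let one box be profitably traded for two.

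For the forward direction (contains $2143$ implies not pure) I would extract, from the two compatible minima and two compatible maxima furnished by the lemma, three boxes $A$ (northwest), $D$ (center), $G$ (southeast) of the grid with $A\sim D$ and $D\sim G$ but $A\not\sim G$, the failure of the last edge coming from holes at the two inner corners created by the staircase steps between the minima and between the maxima; this is the scaled-up version of the base shape $\pi=2143$, whose grid forces the single central box to be excluded and produces maximal independent sets of sizes differing by one. Choosing an innermost such witness so that $A$ and $G$ have no neighbors outside the gadget, I extend $\{D\}$ and $\{A,G\}$ greedily to two maximal independent sets whose sizes differ, witnessing non-purity. The technical care here is exactly in selecting the witness so that the swap $S\mapsto(S\setminus\{D\})\cup\{A,G\}$ preserves independence; I expect this to be routine once the witness is taken extremal, and I would first reduce to a single connected component, noting that the downcore of a disconnected grid is a disjoint union that is pure iff each component is, and that any $2143$ witness lives inside one component.

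The reverse direction (avoids $2143$ implies pure) is where I expect the real work. From the lemma, the right-to-left maxima compatible with a fixed minimum form a contiguous block (moving monotonically rightward as the minimum moves down-right), and $2143$-avoidance says any two such blocks overlap in at most one maximum; this is what prevents the two-rectangle-overlapping-in-one-cell configuration that produced the obstruction above. Concretely I would prove the \emph{local closure property}: whenever $A\sim D\sim G$ with $A$ northwest of $D$ northwest of $G$, one also has $A\sim G$, so that the spanning rectangle never acquires a double hole. Closure lets me orient the complement of the downcore transitively, exhibiting it as the comparability graph of a graded poset on the boxes, whereupon every maximal independent set is a maximal chain of common rank and purity follows. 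The main obstacle is establishing this gradedness \emph{globally} from $2143$-avoidance, that is, ruling out the ``trade one box for two'' configuration everywhere at once rather than only checking it locally; I anticipate that the monotone sliding-interval picture of the compatible blocks is the right tool to make the global argument, but verifying that the induced orientation is genuinely transitive (and not merely locally consistent) is the delicate step.
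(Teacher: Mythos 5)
Your setup is fine as far as it goes: the description of the boundary grid as a union of rectangles of compatible (minimum, maximum) pairs is correct, and so is the pattern-translation lemma forcing the ``$2$'' and ``$1$'' of any $2143$ occurrence to be left-to-right minima and the ``$4$'' and ``$3$'' to be right-to-left maxima. But both halves of your main argument break on one concrete example. Take $\pi = 15432$, which avoids both $123$ and $2143$; its boundary grid is exactly the staircase $\{(x,y) : x+y \le 5\}$, i.e.\ the shape that must come out pure (staircases are precisely the building blocks of boundary grids of $2143$-avoiding permutations, cf.\ Theorem~\ref{thm:reverse}). In this grid set $A=(1,3)$, $D=(2,2)$, $G=(3,1)$. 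Then $A \sim D$ (the corners $(1,2)$ and $(2,3)$ are present), $D \sim G$ (the corners $(2,1)$ and $(3,2)$ are present), but $A \not\sim G$, because $(3,3)$ lies outside the grid. So your ``local closure property'' is false exactly in the $2143$-avoiding situation it was meant to govern. The failure is structural, not a fixable edge case: the five boxes $(1,2),(2,1),(1,3),(2,2),(3,1)$ induce a $5$-cycle in this downcore, the $5$-cycle is self-complementary, and it admits no transitive orientation; since comparability graphs are closed under induced subgraphs, the complement of this downcore is not the comparability graph of any poset, graded or otherwise. Hence purity here cannot be exhibited as ``all maximal chains of a graded poset have the same length.'' The paper proves purity of the staircase by a different route (Theorem~\ref{thm:reverse}): conditioning on which first-column boxes a maximal independent set contains, the remaining admissible boxes split into disjoint smaller staircases, and induction shows every maximal independent set has size exactly $2n-1$.

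The same example exposes the gap in your forward direction: configurations $A \sim D \sim G$ with $A \not\sim G$ occur inside pure downcores, so the existence of such a gadget cannot by itself witness non-purity. What distinguishes a genuine $2143$ obstruction is that \emph{both} corners of the $A$--$G$ rectangle are missing (as in the grid of $2143$ itself, which is the $3\times 3$ square minus $(1,1)$ and $(3,3)$, where your two maximal sets of sizes $6$ and $5$ do exist); but the claim that such a double hole forces non-purity \emph{globally} is the actual theorem, not a routine verification. Your plan to choose an ``innermost'' witness so that $A$ and $G$ have no neighbors outside the gadget is unavailable in general: once $2143$ sits inside a long permutation, $A$, $D$, $G$ acquire many downcore neighbors, and the swap $S \mapsto (S\setminus\{D\})\cup\{A,G\}$ preserves independence only if $A$ and $G$ have no neighbor in $S\setminus\{D\}$, a global condition on $S$ that some maximal independent sets containing $D$ will violate. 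Supplying this global step is where the paper spends essentially all of its effort: an explicit pair of different-size maximal independent sets for the grids of $(n-2)(n-3)\cdots(1)(n)(n-1)$ (Lemma~\ref{lem:Construction}), a lemma that row duplication preserves non-purity (Lemma~\ref{lem:Duplication}, with Lemma~\ref{lem:Row} as a tool), and a minimal-counterexample induction (Theorem~\ref{thm:Conj6.1Hard}) that reduces an arbitrary $123$-avoiding, $2143$-containing permutation to that construction and then stitches independent sets chosen on the first $k+1$ columns to maximal independent sets of the whole grid. Your outline contains no substitute for this machinery, so both directions remain unproved.
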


	While they do not fully address the ``if'' direction, they prove most of the main ideas for this direction, so this section is primarily focused on the ``only if'' direction.

\begin{lemma} \label{lem:Row}
	Suppose we have a fixed maximal independent set in the downcore graph of a skew Young diagram. Then, the independent set must contain at least one box in every row.
\end{lemma}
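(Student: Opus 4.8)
The plan is to argue by contradiction: assume the maximal independent set $S$ misses some nonempty row $r$ entirely, and then exhibit a box in row $r$ that can be adjoined to $S$ without creating any edge, contradicting maximality.

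First I fix conventions. It suffices to work inside the connected component of the diagram containing row $r$, since a box of $S$ in any other component can never be joined by an edge to a box of row $r$ (a bounding rectangle witnessing such an edge would pass through the intervening rows and so connect the two boxes). Within this component every row $y$ is a nonempty interval of columns $[L_y,R_y]$, and both $L_y$ and $R_y$ are weakly decreasing in $y$ because the diagram is skew. Consequently a solid rectangle occupying columns $[a,b]$ and rows $[c,d]$ with $c<d$ lies entirely in the diagram if and only if $a\ge L_c$ and $b\le R_d$. Unwinding the edge definition, a box $(c,r)\notin S$ fails to be addable exactly when it conflicts with either a \emph{low blocker} $(k,\ell)\in S$ having $\ell<r$, $c<k\le R_r$ and $c\ge L_\ell$, or a \emph{high blocker} $(k,\ell)\in S$ having $\ell>r$, $L_r\le k<c$ and $c\le R_\ell$; these are precisely the descending pairs through $(c,r)$ whose bounding rectangle fits. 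Boxes of $S$ in the same row, directly below, or above-and-right of $(c,r)$ create no edge.

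Next comes the descent. I start at the rightmost box $(R_r,r)$. It has no low blocker, since one would require a column $k$ with $R_r<k\le R_r$. Hence if it also has no high blocker it is addable and we are done; otherwise a high blocker $(k_1,\ell_1)$ exists, and as it blocks $c=R_r$ we get $R_{\ell_1}\ge R_r$, forcing $R_{\ell_1}=R_r$. I then walk left to column $k_1$ and iterate: given a high blocker $(k_i,\ell_i)\in S$ of the previously examined column $k_{i-1}$, so that $k_i<k_{i-1}\le R_{\ell_i}$ and $k_i\ge L_r$, I claim $(k_i,r)$ has no low blocker. If $(k_i,r)$ then has no high blocker it is addable, a contradiction; otherwise a high blocker yields $(k_{i+1},\ell_{i+1})$ with $k_{i+1}<k_i$, and the process repeats. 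Since the columns $R_r=k_0>k_1>k_2>\cdots\ge L_r$ strictly decrease, the descent must terminate, and it can only terminate by producing an addable box of row $r$.

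The crux, and the step I expect to be the main obstacle, is the claim that $(k_i,r)$ has no low blocker, which is where independence of $S$ enters. Suppose a low blocker $(k',\ell')$ of $(k_i,r)$ exists, so $L_{\ell'}\le k_i<k'\le R_r$. If $k'\le R_{\ell_i}$, then the high box $(k_i,\ell_i)$ and the low box $(k',\ell')$ form a descending pair whose bounding rectangle (columns $[k_i,k']$, rows $[\ell',\ell_i]$) fits, because $k_i\ge L_{\ell'}$ and $k'\le R_{\ell_i}$; hence they are adjacent, contradicting that both lie in $S$. If instead $k'>R_{\ell_i}$, then $k'\ge R_{\ell_i}+1\ge k_{i-1}+1$, so $L_{\ell'}\le k_i<k_{i-1}\le k'-1\le R_r-1$ shows that $(k',\ell')$ also blocks the earlier column $k_{i-1}$, contradicting the inductively established absence of a low blocker for $(k_{i-1},r)$ (the base case being $(R_r,r)$). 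Either way a contradiction results, proving the claim and completing the descent. The delicate point to verify carefully is exactly this dichotomy: a low blocker that ``escapes'' the current upper box to the right is forced to have already blocked the previous column, so freedom from low blockers propagates leftward along the whole chain.
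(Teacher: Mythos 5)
Your proof is correct, and it takes a genuinely different route from the paper's. The paper's argument is a one-shot construction: after disposing of corner cases (a lower-left or upper-right corner box of row $h$ has degree $0$, hence lies in every maximal independent set), it reduces to the case $a_h=a_{h-1}$, $b_h=b_{h+1}$, forms the rectangle on rows $h+1,\dots,k$ and columns $a_h,\dots,b_h$, where $k$ is the largest row with $b_k=b_h$, and then either adds $(b_h,h)$ (if the independent set misses this rectangle) or takes the leftmost set box $(c,\ell)$ in the lowest occupied row of the rectangle and adds $(c,h)$; in the latter case the single box $(c,\ell)$ acts as a shield, since any set box adjacent to $(c,h)$ is shown to be adjacent to $(c,\ell)$ as well, contradicting independence. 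You instead run an iterative leftward scan along the missed row $r$, starting at $(R_r,r)$, maintaining the invariant that the current box has no low blocker: each high blocker pushes the scan to the column directly beneath it, and the scan terminates by finite descent at an addable box. Your Case 1 is the same shielding-by-independence mechanism that the paper uses, but your Case 2 replaces the paper's careful choice of the rectangle's top row (which forces $b_\ell\ge b_h$ and so catches every low blocker in a single step) with a propagation of the invariant back to the previously scanned column. The paper's construction buys brevity --- one candidate box and no induction --- while your scan buys uniformity and verifiability: the corner cases need no separate treatment (an upper-right corner simply has no high blocker at step $0$), the interval formalization $[L_y,R_y]$ with both endpoints weakly decreasing in $y$ makes every adjacency check mechanical, and termination is a clean descent on column indices. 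One minor remark: your initial reduction to a connected component is harmless but unnecessary, since the monotonicity of $L_y$ and $R_y$ holds across the whole diagram and your argument never actually uses connectivity.
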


\begin{proof}
    Assume for a general row $x$ that the first box in the row is in column $a_x$ and the last box is in column $b_x$. Now, fix a row $h$. Then, if $a_h < a_{h-1}$ or if $h = 1$, the box at position $(a_h, h)$ is a lower-left corner and thus has degree $0$ in the downcore. Similarly, if $b_h > b_{h+1}$ or if $h = n-1$ (since the boundary grid has $n-1$ rows and $n-1$ columns), the box at position $(b_h, h)$ is an upper-right corner and thus has degree $0$ in the downcore. Therefore, it suffices to prove the lemma for row $h$ when $a_h = a_{h-1}, b_h = b_{h+1}.$
  
    Next, choose the largest $k > h$ such that $b_k = b_h.$ Consider the rectangle spanning rows $h+1$ to $k$ and columns $a_h$ to $b_h.$ If a maximal independent set does not contain boxes in this rectangle, then the set must contain a box in row $h$. Otherwise, we can add $(b_h, h)$ to our independent set, as $(b_h, h)$ only shares edges in the downcore with boxes in the rectangle spanning rows $h+1$ to $k$ and columns $a_h$ to $b_h-1$. 
  
    Now, if the independent set contains a box in this rectangle, consider the smallest $\ell$ such that $h+1 \le \ell \le k$ and the independent set contains a box in row $\ell$ of the rectangle. Consider the leftmost box in row $\ell$ which is in the independent set and say it is in column $c$ ($a_h \le c \le b_h$). We show that $(c, h)$ does not share any edges with the independent set.
  
    Suppose that $(c, h)$ and $(c', h')$ share an edge for some $(c', h')$. If $c > c'$ and $h < h'$, then clearly $\ell < h'$ also. But then $(c', h')$ and $(c, \ell)$ share an edge, so they can't both be in the independent set. Else, suppose $c < c'$ and $h > h'$. But then $(c, h)$ and $(c', h')$ sharing an edge means that $(c, h')$ is a box in the skew Young diagram, and $(c', \ell)$ is also in the skew Young diagram as $c' \le b_h$. This means $(c, \ell)$ and $(c', h')$ share an edge, so $(c', h')$ is not in the independent set. Therefore, row $h$ must contain at least one box in any maximal independent set, or else we can add $(c, h)$ to the maximal independent set.
\end{proof}

\begin{lemma} \label{lem:Duplication}
	Suppose that a certain skew Young diagram has a downcore graph that is not pure and one of its rows is duplicated, i.e. a horizontal line is drawn through the middle of a row and makes each box in that row two boxes. This new skew Young diagram's downcore graph is also not pure.
\end{lemma}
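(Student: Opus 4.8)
The plan is to understand the downcore graph $G'$ of the duplicated diagram $D'$ directly in terms of the downcore graph $G$ of the original diagram $D$, and then to lift each maximal independent set of $G$ to one of $G'$ whose size is larger by exactly one. Write $[a,b]$ for the interval of columns occupied by the duplicated row, and call its two copies in $D'$ the \emph{lower} row $r$ and the \emph{upper} row $r+1$ (all rows originally above it shift up by one). The first task is the edge-correspondence: I claim that in $G'$ both the lower and the upper copy of a box $(c,r)$ have exactly the same neighbors among the non-duplicated rows as the original box $(c,r)$ had in $G$, and that the only genuinely new edges are between the two copies, where the lower-copy box in column $c$ is adjacent to the upper-copy box in column $c'$ precisely when $c>c'$ (this last fact is immediate from the two defining conditions of the downcore, since both copies are the full interval $[a,b]$). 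The one calculation to carry out here is rectangle containment: an edge of the downcore requires the rectangle it spans to lie inside the diagram, and inserting a duplicate copy of the interval row $[a,b]$ can never destroy a rectangle that already fit (the new row is itself the full interval $[a,b]$, so it contains every column-range $[k,i]\subseteq[a,b]$ that the old rectangle required), nor can it create containment that failed before; this makes the external adjacencies of each copy identical to those of the original row.

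Granting this, the lift is clean. Fix a maximal independent set $M$ of $G$, let $M_0$ be its boxes outside row $r$, and let $A\subseteq[a,b]$ be the set of columns whose row-$r$ box is \emph{not} adjacent to anything in $M_0$. Because no two boxes in a single row are ever adjacent in the downcore, maximality of $M$ forces its row-$r$ part to be exactly $A$; in particular $A\neq\emptyset$ by Lemma~\ref{lem:Row}, and $|M|=|M_0|+|A|$. I then define $M' := M_0\cup\{(c,r):c\in A\}\cup\{(\max A,\,r+1)\}$, placing every available column in the lower copy and only the largest available column in the upper copy.

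The final step is to check that $M'$ is a maximal independent set of $G'$ of size $|M|+1$. Independence is immediate: no column of $A$ exceeds $\max A$, so there is no lower-upper edge, and all external compatibilities are inherited from $M$. For maximality I check each box not in $M'$: an external box is adjacent to something in $M$ (by maximality of $M$), and whether that blocker lay in $M_0$ or in row $r$ it has a copy in $M'$, so the box is still blocked; a lower-copy box in a column outside $A$ is blocked by $M_0$; and an available upper-copy box in a column $c<\max A$ is blocked by the lower-copy box $(\max A,r)\in M'$ since $\max A>c$. Hence $|M'|=|M_0|+|A|+1=|M|+1$. Since $D$ is not pure it has maximal independent sets of two distinct sizes $s_1\neq s_2$, and these lift to maximal independent sets of $G'$ of sizes $s_1+1\neq s_2+1$; therefore $G'$ is not pure either.

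The main obstacle is the edge-correspondence of the first paragraph — all the content is in verifying that duplicating an interval row leaves every external adjacency unchanged, for which the rectangle-containment bookkeeping must be done carefully across all four cases (each copy against the rows below and above it). Once that is established, the combinatorics of the lift is routine, and the boundary cases where the duplicated row is the top or bottom row only remove cases (the upper or lower block of rows is empty) rather than causing difficulty.
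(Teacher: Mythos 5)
Your proposal is correct and follows essentially the same route as the paper: lift each maximal independent set by keeping it intact in the lower duplicate row (and all other rows) and placing only the rightmost selected box in the upper duplicate row, then verify independence and maximality so that every lifted set grows by exactly one. Your version is somewhat more careful than the paper's (the explicit edge-correspondence between copies and external rows, and the observation that maximality forces the row-$r$ part of $M$ to equal $A$), but these are refinements of the same argument, not a different approach.
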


\begin{proof}
	Choose a maximal independent set for the original grid. For the new diagram, choose a set of boxes so that the bottom duplicated row and all unduplicated rows have same boxes as in the original independent set. For the top duplicated row, only choose the rightmost box in the row that was in the original independent set to be in our new set. (See Figure \ref{Duplication} for an example). This set of boxes forms an independent set in the downcore. To see why, there are clearly no edges in the downcore between two boxes of our new set in the two duplicated rows. Moreover, any edge between any two chosen boxes that aren't both in the duplicated rows must have shared an edge in the original diagram's downcore.
	
	This set of boxes is also a maximal independent set. Suppose that another box on the grid does not share any edges with any of the boxes in our set. Consider the location of the box on the original grid (if the box were on one of the duplicated rows, choose it to be the relative box on the original row). Note that if the location on the original grid coincided with one of the boxes, then on the duplicated grid there would be two pairs of boxes in the set that would make a rectangle, which clearly has an edge in the downcore. 
	
	Otherwise, if this box did not coincide with one of the other boxes when brought back to the original grid, it is clear that if this box shares an edge in the new downcore, it must share the same edge in the downcore if we remove the top duplicated row. Thus, our set of boxes forms a maximal independent set. To finish, note that if we have two maximal independent sets of different sizes in our original downcore, the maximal independent sets we form in the new downcore each increase by $1$, so they have different sizes.
\end{proof}

\begin{figure}
\centering
\begin{tikzpicture}[scale=0.75]

\draw[fill=red] (1, 4) rectangle (2, 5);
\draw (2, 4) rectangle (3, 5);
\draw (3, 4) rectangle (4, 5);
\draw[fill=red] (4, 4) rectangle (5, 5);
\draw[fill=red] (2, 3) rectangle (3, 4);
\draw[fill=red] (3, 3) rectangle (4, 4);
\draw[fill=red] (4, 3) rectangle (5, 4);
\draw[fill=red] (3, 2) rectangle (4, 3);
\draw (4, 2) rectangle (5, 3);
\draw[fill=red] (5, 2) rectangle (6, 3);
\draw[fill=red] (4, 1) rectangle (5, 2);
\draw[fill=red] (5, 1) rectangle (6, 2);
\draw[thick, dashed] (3, 2.5) -- (6, 2.5);

\draw[fill=red] (9, 4) rectangle (10, 5);
\draw (10, 4) rectangle (11, 5);
\draw (11, 4) rectangle (12, 5);
\draw[fill=red] (12, 4) rectangle (13, 5);
\draw[fill=red] (10, 3) rectangle (11, 4);
\draw[fill=red] (11, 3) rectangle (12, 4);
\draw[fill=red] (12, 3) rectangle (13, 4);
\draw (11, 2) rectangle (12, 3);
\draw (12, 2) rectangle (13, 3);
\draw[fill=red] (13, 2) rectangle (14, 3);
\draw[fill=red] (11, 1) rectangle (12, 2);
\draw (12, 1) rectangle (13, 2);
\draw[fill=red] (13, 1) rectangle (14, 2);
\draw[fill=red] (12, 0) rectangle (13, 1);
\draw[fill=red] (13, 0) rectangle (14, 1);

\draw[fill=red] (1, 10) rectangle (2, 11);
\draw (2, 10) rectangle (3, 11);
\draw (3, 10) rectangle (4, 11);
\draw[fill=red] (4, 10) rectangle (5, 11);
\draw[fill=red] (2, 9) rectangle (3, 10);
\draw (3, 9) rectangle (4, 10);
\draw[fill=red] (4, 9) rectangle (5, 10);
\draw[fill=red] (3, 8) rectangle (4, 9);
\draw[fill=red] (4, 8) rectangle (5, 9);
\draw[fill=red] (5, 8) rectangle (6, 9);
\draw[fill=red] (4, 7) rectangle (5, 8);
\draw (5, 7) rectangle (6, 8);
\draw[thick, dashed] (3, 8.5) -- (6, 8.5);

\draw[fill=red] (9, 10) rectangle (10, 11);
\draw (10, 10) rectangle (11, 11);
\draw (11, 10) rectangle (12, 11);
\draw[fill=red] (12, 10) rectangle (13, 11);
\draw[fill=red] (10, 9) rectangle (11, 10);
\draw (11, 9) rectangle (12, 10);
\draw[fill=red] (12, 9) rectangle (13, 10);
\draw (11, 8) rectangle (12, 9);
\draw (12, 8) rectangle (13, 9);
\draw[fill=red] (13, 8) rectangle (14, 9);
\draw[fill=red] (11, 7) rectangle (12, 8);
\draw[fill=red] (12, 7) rectangle (13, 8);
\draw[fill=red] (13, 7) rectangle (14, 8);
\draw[fill=red] (12, 6) rectangle (13, 7);
\draw (13, 6) rectangle (14, 7);

\end{tikzpicture}

\caption{The above figures are meant to help explain Lemma \ref{lem:Duplication}. The top-left and bottom-left diagrams are skew Young diagrams where the red shaded boxes form maximal independent sets of different sizes. Then, if we duplicate the second row from the bottom, we can create two maximal independent sets of differing sizes for the new skew Young diagram. We do this by choosing the red boxes in the bottom duplicate row identically to the original row, and choosing only the rightmost red box in the original row to be red in the top duplicate row.}
\label{Duplication}
\end{figure}
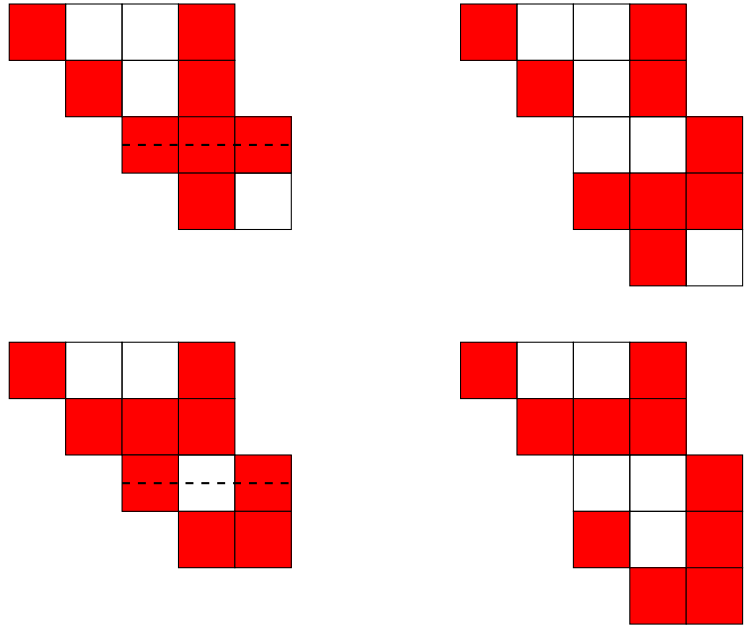

%
%
%

\begin{lemma} \label{lem:Construction}
	For $n \ge 4,$ consider the boundary grid created by the permutation $(n-2)(n-3)\cdots(1)(n)(n-1)$. This boundary grid is not pure.
\end{lemma}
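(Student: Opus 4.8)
The plan is to compute the boundary grid of $\pi = (n-2)(n-3)\cdots(1)\,n\,(n-1)$ explicitly, convert the downcore adjacency into a clean numerical criterion, and then exhibit two maximal independent sets of different sizes. First I would identify the relevant extreme elements. Since $\pi_1 > \pi_2 > \cdots > \pi_{n-2}$ is the decreasing run $n-2, n-3, \dots, 1$, the left-to-right minima sit at the points $(i, n-1-i)$ for $1 \le i \le n-2$, while the only right-to-left maxima are $(n-1, n)$ and $(n, n-1)$. Applying the definition of the boundary grid (a box lies in the grid iff it is above-and-right of some left-to-right minimum and below-and-left of some right-to-left maximum), the two maxima force exactly the single exclusion $(x,y) \neq (n-1,n-1)$, and the staircase of minima forces precisely $x + y \ge n-1$. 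Thus the boundary grid is the clean triangular region
\[ G = \{(x,y) : 1 \le x, y \le n-1,\ x+y \ge n-1\} \setminus \{(n-1,n-1)\}. \]

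Next I would record the downcore edges on $G$. For a pair $(i,j)$ (upper-left) and $(k,\ell)$ (lower-right) with $i<k$, $j>\ell$, the rectangle condition says there is an edge iff the two opposite corners $(i,\ell)$ and $(k,j)$ both lie in $G$; because $G$ is governed by the single inequality $x+y \ge n-1$ plus one missing corner, this reduces to the criterion that there is an edge iff $i + \ell \ge n-1$ and $(k,j) \ne (n-1,n-1)$. From this criterion it is immediate that every box on the lowest diagonal $x+y=n-1$, together with the two boxes $(n-2,n-1)$ and $(n-1,n-2)$, is isolated, so these $n$ vertices (call their set $\mathcal I$) lie in every maximal independent set.

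Finally I would produce the two maximal independent sets. Take $S_1 = \mathcal I \cup T \cup R$, where $T = \{(a,n-1) : a \le n-3\}$ is the non-isolated part of the top row and $R = \{(n-1,b): b\le n-3\}$ is the non-isolated part of the right column; and take $S_2 = \mathcal I \cup \{(a, n-2) : 2 \le a \le n-2\}$, the isolated boxes together with row $n-2$. Both are easily seen to be independent: boxes within $T$ share a row and boxes within $R$ share a column, while any $T$--$R$ pair would need the excluded corner $(n-1,n-1)$, so $T \cup R$ has no edges; and row $n-2$ lies in a single row. The main work, and the step I expect to be the real obstacle, is verifying maximality: I must check that every remaining box, i.e.\ every interior box $(a,b)$ with $a,b \le n-2$ and $a+b \ge n$ (so that automatically $a, b \ge 2$), is adjacent to a vertex of the set. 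For $S_1$ this follows by exhibiting the explicit neighbor $(n-1, n-1-a) \in R$ and checking the edge criterion; for $S_2$ it follows by choosing an appropriate $a'$ with $(a',n-2) \in S_2$, splitting into the cases $b = n-1$ and $b \le n-3$ and invoking the criterion in each. The bookkeeping needed to confirm that these candidate neighbors always lie in the respective sets and genuinely form edges is the most delicate part.

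Granting maximality, a direct count gives $|\mathcal I| = n$, $|T \cup R| = 2(n-3)$, and $|\{(a,n-2)\}| = n-3$, so $|S_1| = 3n-6$ and $|S_2| = 2n-3$. These differ by $n-3 \ge 1$ for every $n \ge 4$, exhibiting two maximal independent sets of different sizes; hence the downcore graph of the boundary grid is not pure.
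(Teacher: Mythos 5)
Your proposal is correct, and although it follows the paper's basic strategy of exhibiting two maximal independent sets of different sizes, the execution is genuinely different. Your first set $S_1$ (the isolated boxes together with the rest of the top row and right column) coincides exactly with the paper's perimeter set, but your second set does not: the paper perturbs the perimeter set locally, deleting the boxes $(n-3,n-1)$ and $(n-1,1)$ and adding $(n-2,2)$, producing sizes $3(n-2)$ versus $3(n-2)-1$, whereas you take the isolated boxes together with the middle of row $n-2$, of size $2n-3$, so your sets differ by $n-3$ rather than by $1$. Your route buys two things. First, uniformity: the paper handles $n=4$ by a separate explicit figure and runs its general construction only for $n\ge 5$, while your argument works verbatim for all $n\ge 4$. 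Second, mechanization: by first pinning down the grid as $\{(x,y): x+y\ge n-1\}\setminus\{(n-1,n-1)\}$ and converting the rectangle condition into the criterion ``$i+\ell\ge n-1$ and $(k,j)\neq(n-1,n-1)$'' (both of which I have checked are correct, and which match the paper's figures), every adjacency and maximality check becomes one line of arithmetic, where the paper argues pictorially about corners, rows, and columns. The steps you flagged as delicate do go through: for $S_1$, the neighbor $(n-1,\,n-1-a)$ of an interior box $(a,b)$ works because $a+(n-1-a)=n-1$ and $(n-1,b)\neq(n-1,n-1)$ since $b\le n-2$; for $S_2$, take $a'=a+1$ in the case $b=n-1$ and $a'=n-1-b$ in the case $b\le n-3$, and in both cases $a'$ lies in $\{2,\dots,n-2\}$ and the criterion is satisfied. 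One last remark: the paper's specific pair of sets is reused later (the proof of Theorem \ref{thm:Conj6.1Hard} invokes the two independent sets of Figure \ref{GeneralFigure} together with Lemma \ref{lem:Duplication}), but since that argument only requires two maximal independent sets of different sizes, your pair would serve equally well there.
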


\begin{proof}
	Suppose first that $n = 4$. Then, the shaded boxes in Figure \ref{4figure} represent maximal independent sets (this is straightforward to verify). But the left diagram has $6$ selected boxes and the right diagram has $5$ selected boxes, so the downcore graph is not pure. 

\begin{figure}
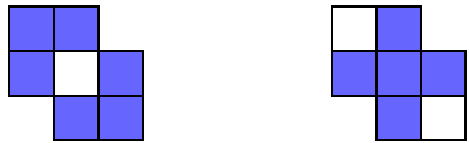
 
\begin{ytableau}
\none & *(blue!60) & *(blue!60)\\
\none & *(blue!60) & & *(blue!60) \\
\none & \none & *(blue!60) & *(blue!60) \\
\end{ytableau}
\begin{ytableau}
\none & \none & \none & \none & & *(blue!60) \\
\none & \none & \none & \none & *(blue!60) & *(blue!60) & *(blue!60) \\
\none & \none & \none & \none & \none & *(blue!60) &\\
\end{ytableau}

\caption{Diagram for $n = 4$, corresponding to Lemma \ref{lem:Construction}.}
\label{4figure}
\end{figure}

	Now, suppose that $n \ge 5$. Let $S_n$ be the set of boxes on the perimeter. The set $S_n$ is an independent set since the $n-2$ boxes on the bottom-left staircase (shaded green in Figure \ref{GeneralFigure}) have degree $0$ in the downcore. It is also obvious that the $n-2$ boxes in the topmost row and the $n-2$ boxes in the rightmost column do not share any edges between them (as the top-right corner is missing) and do not share any edges among them (as the top boxes are in the same row and the right boxes are in the same column). The set $S_n$ is also a maximal independent set since for any unshaded box in the $k^{\text{th}}$ row from the bottom (where $2 \le k \le n-2$), the box labeled $(n-1-k, n-1)$ shares an edge with it.

	Now, consider the set $T_n$ obtained by removing $(n-3, n-1),$ the second rightmost box of the top row, and $(n-1, 1)$, the rightmost box of the bottom row, and adding $(n-2, 2)$, the middle box of the second bottom row (as shown in yellow in the right half of Figure \ref{GeneralFigure}). To see why $T_n$ is an independent set, note that the only box we have to check is $(n-2, 2)$. However, the only box $(n-2, 2)$ shares an edge with in the top row is the box $(n-3, n-1)$, which is not in our set. Also, every box in the rightmost column is in the same row or in a higher row as $(n-2, 2)$, except the box labeled $(n-1, 1)$, which is not in the set. Finally, as the boxes in the bottom-left staircase all have degree $0$, $T_n$ is an independent set.
	
	To show the set is maximal, note that the two deleted boxes each share an edge with the added box, so they cannot be added. The remaining boxes not in the set are in rows $3$ to $n-2$ and thus share edges with some box in the top row and columns $1$ to $n-4$, which are all in the set. Therefore, the set is maximal.
  
\begin{figure}
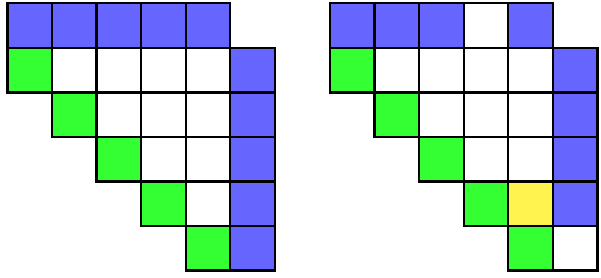


\begin{ytableau}
\none & *(blue!60) & *(blue!60) & *(blue!60) & *(blue!60) & *(blue!60)\\
\none & *(green!80) & & & & & *(blue!60)\\
\none & \none & *(green!80) & & & & *(blue!60)\\
\none & \none & \none & *(green!80) & & & *(blue!60) \\
\none & \none & \none & \none & *(green!80) & & *(blue!60) \\
\none & \none & \none & \none & \none & *(green!80) & *(blue!60) \\
\end{ytableau}
\begin{ytableau}
\none & *(blue!60) & *(blue!60) & *(blue!60) & & *(blue!60)\\
\none & *(green!80) & & & & & *(blue!60)\\
\none & \none & *(green!80) & & & & *(blue!60)\\
\none & \none & \none & *(green!80) & & & *(blue!60) \\
\none & \none & \none & \none & *(green!80) & *(yellow!80) & *(blue!60) \\
\none & \none & \none & \none & \none & *(green!80) & \\
\end{ytableau}

\caption{General diagram for $n \ge 5$, corresponding to Lemma \ref{lem:Construction}. In this example we have set $n = 7$.}
\label{GeneralFigure}
\end{figure}
  
  It is clear that there are $3(n-2)$ boxes in the maximal set in the left diagram and $3(n-2)-1$ boxes in the maximal set in the right diagram. Thus, for $n \ge 5$, the boundary grid's downcore graph is not pure.
\end{proof}

\begin{theorem} \label{thm:Conj6.1Hard}
	The downcore of the boundary grid of a $123$-avoiding permutation is pure only if the permutation avoids $2143$.
\end{theorem}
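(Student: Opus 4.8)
The plan is to prove the contrapositive: if $\pi$ is $123$-avoiding and \emph{contains} $2143$, then the downcore of its boundary grid is not pure. First I would translate a $2143$ occurrence into the geometry of the boundary grid. Fix positions $p_1<p_2<p_3<p_4$ with $\pi_{p_2}<\pi_{p_1}<\pi_{p_4}<\pi_{p_3}$. By Proposition \ref{prop:123} every entry is a left-to-right minimum or a right-to-left maximum, and I would use this to extract from the occurrence two consecutive left-to-right minima and two consecutive right-to-left maxima: the descending pair $\pi_{p_1}>\pi_{p_2}$ (occurring before $p_3$) forces at least one genuine concave corner on the lower-left boundary of the grid, and the descending pair $\pi_{p_3}>\pi_{p_4}$ forces a concave corner on the upper-right boundary. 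The inequality $\pi_{p_1}<\pi_{p_4}$ together with the position ordering makes these two corners ``face each other'' across a common rectangle. The goal then reduces to the following purely geometric statement: \emph{a boundary grid whose lower-left and upper-right boundaries each have a concave corner is not pure.}

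Next I would reduce to a minimal shape using Lemma \ref{lem:Duplication}. Since the downcore edge relation is symmetric under reflecting the skew diagram across its main diagonal (which sends skew Young diagrams to skew Young diagrams), Lemma \ref{lem:Duplication} also holds for duplicating a \emph{column}. Collapsing every maximal block of identical consecutive rows, and of identical consecutive columns, to a single row or column yields a \emph{reduced} skew shape; the original grid is recovered from it by a sequence of the duplications in Lemma \ref{lem:Duplication}. Hence it suffices to prove the reduced shape is non-pure, as non-purity then lifts back to the original grid. Crucially, collapsing identical consecutive rows or columns does not destroy concave corners (those occur exactly where the span changes), so the reduced shape still has a concavity on each boundary.

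For reduced shapes I would construct two maximal independent sets of different sizes, modeled on the sets $S_n$ (the outer perimeter) and $T_n$ (the perturbation removing two boundary boxes and inserting one interior box) from the proof of Lemma \ref{lem:Construction}. The large set traces the outer boundary; the small set replaces a boundary pair near a concave corner by a single ``crossing'' box which, because the two concavities face each other, simultaneously dominates an entire row and column. Here Lemma \ref{lem:Row} is the decisive certification tool: because every maximal independent set meets every row, it both forces the candidate sets to be maximal and pins down their sizes, producing the needed size defect. When each boundary has a single concavity this specializes to $2143$ itself, and the one-sided multi-step case is exactly the family $(n-2)(n-3)\cdots(1)(n)(n-1)$ already handled by Lemma \ref{lem:Construction}.

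The hard part will be the \emph{doubly-concave} case, where a concave corner is present on each boundary simultaneously. The base construction of Lemma \ref{lem:Construction} only supplies shapes with a single concave corner on one boundary (the family above and, by transpose, its mirror), and since the number of concave corners on each boundary is invariant under the duplications of Lemma \ref{lem:Duplication}, a shape such as the boundary grid of $321654$ — with two concave corners on each side — provably cannot be obtained from any $(n-2)(n-3)\cdots(1)(n)(n-1)$ by duplication. Thus Lemma \ref{lem:Construction} alone does not cover it, and the crux is to carry out the two-independent-set construction directly for a general reduced doubly-concave shape. The most delicate step is verifying maximality of the \emph{smaller} set — the analogue of the corner-by-corner argument that closes out Lemmas \ref{lem:Row} and \ref{lem:Construction} — since one must control both concave corners at once and show that the deficit created near one corner cannot be repaired using boxes freed up near the other.
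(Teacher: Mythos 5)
Your proposal assembles the right ingredients (the contrapositive, duplication/reduction via Lemma \ref{lem:Duplication}, and the two-set construction of Lemma \ref{lem:Construction}), and you are right that the family $(n-2)(n-3)\cdots(1)(n)(n-1)$ together with row/column duplication cannot reach shapes with notches on both boundaries. But you then leave exactly that case --- ``carry out the two-independent-set construction directly for a general reduced doubly-concave shape'' --- as an unproved crux. That is not a minor verification: it is the whole difficulty of the theorem, and your outline offers no mechanism for it. Worse, the target statement you reduce to is false if stated as you state it in the reduction step (``the reduced shape still has a concavity on each boundary,'' then prove non-purity): the boundary grid of $3142$ is a reduced skew shape with a concave corner on each boundary, yet $3142$ avoids $2143$ and its downcore \emph{is} pure (it is two staircases sharing a box, covered by Theorem \ref{thm:reverse}). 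So everything hinges on the ``facing each other'' condition, which your reduction never pins down or propagates, and for which you give no construction.

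The paper's proof sidesteps the global classification you are attempting. It takes a minimal counterexample $\pi \in Av_n(123) \cap Co_n(2143)$ with pure downcore and runs a case analysis on the initial segment of $\pi$: after disposing of skew-decomposable $\pi$ and several cases by deleting degree-$0$ boxes and invoking Lemma \ref{lem:Duplication} (each time producing a shorter permutation still containing $2143$, contradicting minimality), it reduces to $\pi_{k+1} = n$, $\pi_{k+\ell} = n-1$. Then --- and this is the idea you are missing --- it only needs the explicit non-pure construction on the \emph{first $k+1$ columns} of the grid, which are a row-duplicated copy of the Lemma \ref{lem:Construction} shape, regardless of what the rest of the grid looks like. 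The two selections there are completed to the full grid by one common choice of boxes in the remaining columns, and a transfer argument (any edge from a box beyond column $k+1$ to a box of the large selection in column $k+1$ forces an edge to a box of the small selection in column $k$) shows both completions are maximal independent sets, of sizes differing by $1$. This localization-plus-extension step is what lets the induction close without ever constructing maximal independent sets on a doubly-notched shape; without it, or some substitute for it, your argument does not go through.
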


\begin{proof} 
	Suppose otherwise. Consider the smallest $n$ such that a permutation in $Av_n(123) \cap Co_n(2143)$ has a boundary grid with a pure downcore graph. Call the permutation $\pi$. If $\pi$ is skew-decomposable, then $\pi$'s boundary grid can be split into two disjoint boundary grids, both of which must be pure and one of which must correspond to a permutation containing $2143$, contradicting minimality of $n$. Therefore, $\pi$ is skew-indecomposable, so $\pi_1$ is only a left-to-right minimum. If $\pi_2$ is a right-to-left maximum, then it clearly cannot be part of an occurrence of a $2143$ pattern. This also means $\pi_2 = n$ and $\pi_1 \neq n-1$, as $\pi$ is skew-indecomposable. In this case, the top row of the boundary grid only has one box (in the leftmost column), which has degree $0$ in the downcore graph. Moreover, removing the top row will give us the boundary grid of $\pi_1 \pi_3 \cdots \pi_n$ but with the first column duplicated. Therefore, if the original downcore is pure, removing the box in the top row will still result in a pure downcore (as the box has degree $0$ in the downcore), and by Lemma \ref{lem:Duplication}, removing the first column of the boundary grid (which is identical to the second column) will still result in a pure downcore graph. This contradicts $n$'s minimality since the boundary grid for $\pi_1 \pi_3 \cdots \pi_n$ has a pure downcore graph but $\pi_1 \pi_3 \cdots \pi_n$ contains a $2143$ pattern, so $\pi_2$ must be a left-to-right minimum.
  
    Now, assume that $\pi_1, \ldots , \pi_k$ are left-to-right minima ($k \ge 2$) and $\pi_{k+1}$ is a right-to-left maximum. Also, suppose that $\pi_{k+\ell}$ is the next right-to-left maximum (there must be at least two right-to-left maxima or else the permutation would avoid $2143$). If $\pi_{k+\ell}<\pi_k,$ then $\pi_i < \pi_k$ for all $i > k,$ which contradicts skew-indecomposability.
  
    If $\pi_{k-1} > \pi_{k+\ell} > \pi_k,$ then $\pi_i = n-i$ for $1 \le i \le k-1,$ $\pi_k = n,$ and $\pi_{k+\ell} = n-k.$ This means the $k^{\text{th}}$ row from top has exactly one box in the boundary grid, at column $k$. As a result, the box at $(k, n-k)$ has degree $0$ in the downcore, and the boxes above row $k$ do not share any edges with the boxes below row $k$. Thus, if downcore of $\pi$ is pure, the downcore of the rows below row $k$ are pure. However, note that for all $1 \le i \le k-1$, $\pi_{k+1} = n$ is the only element of the form $\pi_r$ for some $r > i$ such that $\pi_{r} > \pi_i$. Thus, none of $\pi_1, \dots, \pi_{k-1},\pi_{k+1}$ can be the first element of a $2143$ classical pattern. Therefore, any $2143$ pattern must begin at least at $\pi_k$. But since $\pi_{k+1}$ must be the largest element in the $2143$ pattern, it cannot be in a $2143$ classical pattern, so the reduction of $\pi_k\pi_{k+2}\cdots\pi_n$ must contain a $2143$. However, the boundary grid of $\pi$ restricted to rows below row $k$ is equivalent to the boundary grid of $\pi_k\pi_{k+2}\cdots\pi_n$ with the first column duplicated, meaning that the downcore of the boundary grid of $\pi_k\pi_{k+2}\cdots\pi_n$ must be pure, a contradiction to $n$'s minimality.
  
    Now, suppose that $\pi_{i-1}>\pi_{k+\ell}>\pi_i,$ where $1 \le i \le k-1$ and $\pi_0 := \pi_{k+1}$. If $i > 1,$ then it is easy to see that the leftmost column contains only one box (in the top row) which has degree $0$ in the downcore. As a result it can be eliminated, i.e. it suffices to prove that the boundary grid after removing the box in the top-left corner is not pure. But then the top two rows are identical, so Lemma \ref{lem:Duplication} tells us that we can remove one of the top two rows. This new diagram is equivalent to the boundary grid of $\pi_2\cdots\pi_n$ which means to prevent contradicting minimality, we can assume $i = 1,$ so $\pi_{k+1} = n, \pi_{k+\ell} = n-1.$
  
    Next, note that all elements to the right of $\pi_{k+1}$ are at most $n-1 = \pi_{k+\ell}$. This means that the first $k+1$ columns of the boundary grid of $\pi$ is equivalent to the the boundary grid of $(k)(k-1) \cdots (1)(k+2)(k+1)$ except with perhaps some of the rows replicated. Thus, by using the methods of Lemma \ref{lem:Duplication} and Lemma \ref{lem:Construction}, we can create two maximal independent sets for the first $k+1$ columns, the second maximal independent set having one less box.

\begin{figure}
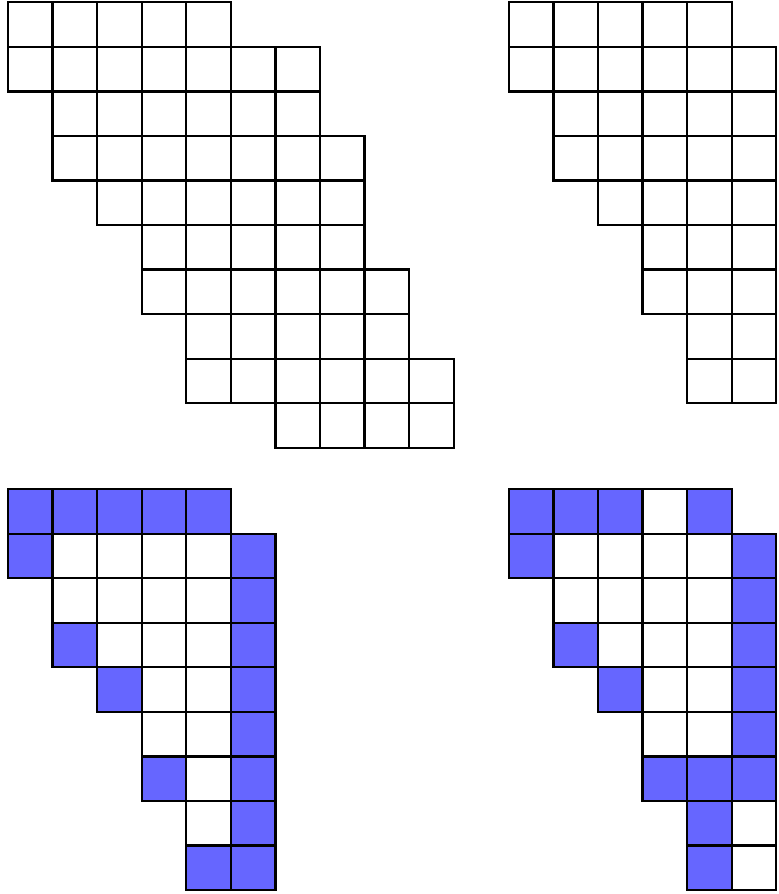

\begin{ytableau}
 *(white!60) & & & & \\
 *(white!60) & & & & & &\\
 \none & & & & & & \\
 \none & & & & & & & \\
 \none & \none & & & & & & \\
 \none & \none & \none & & & & & \\
 \none & \none & \none & & & & & & \\
 \none & \none & \none & \none & & & & & \\
 \none & \none & \none & \none & & & & & & \\
 \none & \none & \none & \none & \none & \none & & & & \\
\end{ytableau}
\begin{ytableau}
\none & *(white!60) & & & & \\
\none & *(white!60) & & & & &\\
\none & \none & & & & & \\
\none & \none & & & & & \\
\none & \none & \none & & & & \\
\none & \none & \none & \none & & & \\
\none & \none & \none & \none & & & \\
\none & \none & \none & \none & \none & & \\
\none & \none & \none & \none & \none & & \\
\end{ytableau}

\vspace{.5cm}

\begin{ytableau}
*(blue!60) & *(blue!60) & *(blue!60) & *(blue!60) & *(blue!60)\\
*(blue!60) & & & & & *(blue!60)\\
\none & & & & & *(blue!60)\\
\none & *(blue!60) & & & & *(blue!60)\\
\none & \none & *(blue!60) & & & *(blue!60) \\
\none & \none & \none & & & *(blue!60) \\
\none & \none & \none & *(blue!60) & & *(blue!60) \\
\none & \none & \none & \none & & *(blue!60) \\
\none & \none & \none & \none & *(blue!60) & *(blue!60) \\
\end{ytableau}
\begin{ytableau}
\none & \none & \none & \none & \none & *(blue!60) & *(blue!60) & *(blue!60) & & *(blue!60)\\
\none & \none & \none & \none & \none & *(blue!60) & & & & & *(blue!60)\\
\none & \none & \none & \none & \none & \none & & & & & *(blue!60)\\
\none & \none & \none & \none & \none & \none & *(blue!60) & & & & *(blue!60)\\
\none & \none & \none & \none & \none & \none & \none & *(blue!60) & & & *(blue!60) \\
\none & \none & \none & \none & \none & \none & \none & \none & & & *(blue!60) \\
\none & \none & \none & \none & \none & \none & \none & \none & *(blue!60) & *(blue!60) & *(blue!60) \\
\none & \none & \none & \none & \none & \none & \none & \none & \none & *(blue!60) & \\
\none & \none & \none & \none & \none & \none & \none & \none & \none & *(blue!60) & \\
\end{ytableau}

\caption{We give an example for the case $\pi_1 > \pi_{k+1} > \pi_{k+\ell}.$ Here, $n = 11, k = 5,$ and $\ell = 3$. The top-left figure is the boundary grid of a permutation $\pi = (9, 7, 6, 4, 2, 11, 1, 10, 8, 5, 3)$. The top-right figure is the first $k+1 = 6$ columns of the boundary grid, which is equivalent to the grid in Figure \ref{GeneralFigure} with the $3^{\text{rd}}, 5^{\text{th}},$ and $6^{\text{th}}$ rows duplicated. We can thus use our two maximal independent sets of Figure \ref{GeneralFigure} and Lemma \ref{lem:Duplication} to get two maximal independent sets with different sizes, as seen by the bottom-left and bottom-right figures. Therefore, the downcore graph of the top-left figure is not pure, since we can choose boxes for the left $6$ columns in either way, but a box not in the left $6$ columns shares an edge with a shaded box if the left $6$ columns are shaded the first way, if and only if the same is true if the left $6$ columns are shaded the second way.}
\label{MainTheorem}
\end{figure}

	Now, if there are any additional columns, select boxes from these columns so that when combined with the second (right) selection of boxes for the first $k+1$ columns, the boxes form a maximal independent set for the entire boundary grid. We show that the same selection, when combined with the first selection of boxes for the first $k+1$ columns, forms an independent set for the entire boundary grid.

	Note that the only way for this selection to not form an independent set is for one of the boxes in the first set to share an edge with one of the boxes not in the first $k+1$ rows but for none of the boxes in the second set to share an edge with the same box.

	The only boxes in the first set but not in the second set which can actually have an edge with any box beyond the first $k+1$ columns must be in the $(k+1)^{\text{th}}$ column. But then if $(k+1, a)$ and $(\ell, b)$ are connected where $\ell > k+1,$ it is clear that $(k, a)$ and $(\ell, b)$ do as well, since $(k+1, b)$ being in the grid means $(k, b)$ is also in the grid. It is easy to see from Figure \ref{MainTheorem} that if $(k+1, a)$ is in the first set but not in the second set, then $(k, a)$ is in the second set. This completes our proof.
\end{proof}

We now continue to prove the ``if'' direction.

Bean et al. \cite[Lemma 5.3]{BeanTannockUlfarsson2016} proved that the boundary grid of every skew-indecomposable $2143$-avoiding permutation can be decomposed into a series of staircase grids (with the diagonal going from top-left to bottom-right), consecutive staircases sharing exactly one box. As a result, no two boxes in different staircases can share an edge, and therefore it suffices to prove that the downcore graph of staircase grid is pure.

\begin{theorem} \label{thm:reverse}
	The downcore graph of a staircase grid with the diagonal going from top-left to bottom-right is pure, and if the size of the staircase is $n$, then every maximal independent set has size $2n-1$.
\end{theorem}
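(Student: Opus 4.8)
The plan is to recoordinatize the staircase grid so that its downcore becomes a graph I recognize. Place the size-$n$ staircase as the boxes $\{(x,y): 1\le x,y\le n,\ x+y\ge n+1\}$, with the diagonal running from the top-left box $(1,n)$ to the bottom-right box $(n,1)$. Because the grid is closed under moving up and to the right (if $x+y\ge n+1$ then $x'+y'\ge n+1$ for $x'\ge x,\ y'\ge y$), the rectangle spanned by a strictly northwest/southeast pair $(i,j)$ and $(k,\ell)$ (so $i<k$, $j>\ell$) lies entirely in the grid exactly when its bottom-left corner $(i,\ell)$ does, i.e. when $i+\ell\ge n+1$. First I would record this explicit edge criterion: $(i,j)\sim(k,\ell)$ with $i<k,\ j>\ell$ if and only if $i+\ell\ge n+1$, together with the observation that boxes sharing a row or a column, or lying in a strictly northeast/southwest relation, are never adjacent.

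The key idea is a bijection between the $\binom{n+1}{2}$ boxes and the $2$-element subsets (chords) of $\{1,\dots,n+1\}$, sending $(x,y)\mapsto\{\,n+1-y,\ x+1\,\}$. One checks this is a bijection: the grid condition $x+y\ge n+1$ guarantees $n+1-y<x+1$, the endpoints range over $\{1,\dots,n\}$ and $\{2,\dots,n+1\}$, and a pair $\{p,q\}$ with $p<q$ is recovered by $y=n+1-p,\ x=q-1$. I would then verify that this map is a graph isomorphism from the downcore onto the crossing graph of chords of $n+1$ points in convex position, where two chords are adjacent iff their endpoints interleave. The decisive computation is for a northwest/southeast pair: the associated chords have left endpoints $n+1-j<n+1-\ell$ and right endpoints $i+1<k+1$, so they interleave iff $n+1-\ell<i+1$, i.e. iff $i+\ell\ge n+1$, which is precisely the downcore condition. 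The remaining configurations match perfectly: a shared row gives a shared left endpoint, a shared column gives a shared right endpoint, and a northeast/southwest pair gives nested chords, all non-crossing, mirroring the absence of downcore edges. As a reassuring check, the $n$ diagonal boxes $(i,n+1-i)$ map to the consecutive chords $\{i,i+1\}$ and the corner $(n,n)$ maps to the outer chord $\{1,n+1\}$, which are exactly the isolated vertices.

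With the isomorphism in hand, independent sets of the downcore correspond to families of pairwise non-crossing chords, and maximal independent sets correspond to \emph{maximal} such families. I would then invoke the standard fact that a maximal family of pairwise non-crossing chords of a convex $(n+1)$-gon is exactly a triangulation: it must contain all $n+1$ boundary chords (each of $\{i,i+1\}$ and $\{1,n+1\}$ crosses nothing, hence is forced) together with a maximal non-crossing set of diagonals. Every triangulation of a convex $(n+1)$-gon has exactly $(n+1)-2$ triangles and therefore $2(n+1)-3=2n-1$ edges. Consequently every maximal independent set has size $2n-1$, which simultaneously establishes purity and the claimed size.

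The step I expect to carry the genuine content, rather than bookkeeping, is pinning down the edge criterion $i+\ell\ge n+1$ and then proving the bijection is an isomorphism in \emph{every} configuration; once the downcore is identified with the chord-crossing graph, purity follows immediately from the classical invariance of the number of edges in a triangulation. A secondary point to state carefully is the equivalence ``maximal non-crossing family of chords $=$ triangulation'' — in particular that maximality forces all $n+1$ boundary chords and a full set of diagonals — so that no maximal independent set can be smaller than $2n-1$.
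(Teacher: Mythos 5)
Your proof is correct, but it takes a genuinely different route from the paper. The paper argues by induction on $n$: fixing a maximal independent set, it looks at the chosen boxes $(1,a_1),\dots,(1,a_k)$ in the first column (maximality forces $a_1=1$, $a_k=n$), shows that the remaining usable boxes split into disjoint, mutually non-interacting staircases of sizes $a_{t+1}-a_t$, and sums the inductive counts to get $k+\sum_{t=1}^{k-1}\bigl(2(a_{t+1}-a_t)-1\bigr)=2n-1$ independently of the choice of $a_t$'s. You instead identify the downcore outright: your coordinate change, the edge criterion $i+\ell\ge n+1$, and the map $(x,y)\mapsto\{n+1-y,\,x+1\}$ all check out (I verified the four configuration cases: the NW/SE crossing computation, shared row/column giving shared endpoints, and NE/SW giving nesting), so the downcore of the size-$n$ staircase is exactly the crossing graph of chords of a convex $(n+1)$-gon, and purity follows from the classical fact that every maximal non-crossing family of chords is the edge set of a triangulation, of size $2(n+1)-3=2n-1$. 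What your approach buys: it is structural rather than recursive, it explains the answer conceptually, and it yields bonus information for free (e.g.\ the number of maximal independent sets is the Catalan number $C_{n-1}$, the number of triangulations of an $(n+1)$-gon, and the independence complex is the one underlying the associahedron). What it costs: it outsources the combinatorial core to two standard facts --- that a maximal non-crossing set of diagonals of a convex $m$-gon is a triangulation, and that all triangulations have exactly $m-3$ diagonals --- which are classical and citable, but which would themselves require a short induction if the argument had to be self-contained, whereas the paper's induction needs nothing external. You should also state explicitly (as the paper does with its ``without loss of generality'') that your placement of the staircase as the up-closed set $\{x+y\ge n+1\}$ rather than the down-closed one is legitimate because the $180^\circ$ rotation $(x,y)\mapsto(n+1-x,n+1-y)$ is an isomorphism of downcores, and perhaps remark that the degenerate cases $n=1,2$ (polygons with $2$ or $3$ vertices, no diagonals) still satisfy the edge count $2n-1$.
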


\begin{proof}
	We proceed by induction. Note for $n = 1$ and $n = 2$ this is trivial. Now, suppose that $n \ge 3.$ Also, assume without loss of generality that the staircase contains the bottom-left but not top-right corner, so that the boxes are of the form $(i, j)$, where $i, j \ge 1$ and $i+j \le n+1$.
  
  Next, choose a maximal independent set with the selected boxes in the left column $(1, a_1), \ldots, (1, a_k)$ where $a_1 < \cdots < a_k.$ Note that $a_1 = 1$ and $a_k = n$ because $(1, 1)$ and $(1, n)$ do not have any edges in the downcore graph. Now, any box with coordinates $(i, j)$ where $j < a_t$ and $i \le n+1-a_t$ for some $a_t$ cannot be included in the independent set, as it shares an edge with $(1, a_t).$ Therefore, if $(i, j)$ is in the independent set, $a_t > j \ge a_{t-1}$ and $i \ge n+2-a_t$ for some $2 \le t \le k.$ Therefore, the only allowable boxes in the independent set form a disjoint set of staircase grids of size $a_2-a_1, \dots, a_{k}-a_{k-1}$ (see Figure \ref{Converse} for an example). If $(i, j)$ and $(k, \ell)$ are in different staircase grids, then if we assume without loss of generality that $i > k,$ then $j < a_t \le \ell$ for some $t$, which means that $i > n+1-a_t.$ However, this means that $(i, j)$ and $(k, \ell)$ cannot share an edge in the downcore, since $i+\ell \ge i+a_t > n+1,$ which means that $(i, \ell)$ is not contained in the original grid.

\begin{figure}
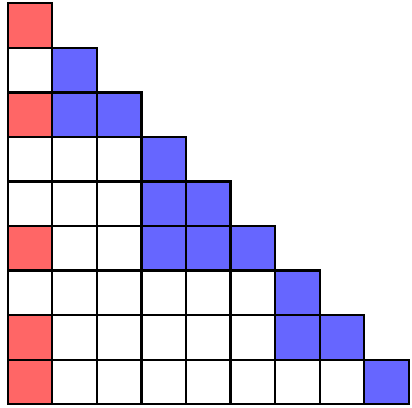

\begin{ytableau}
\none & *(red!60)\\
\none & *(white!60) & *(blue!60)\\
\none & *(red!60) & *(blue!60) & *(blue!60)\\
\none & *(white!60) & *(white!60) & *(white!60) & *(blue!60)\\
\none & *(white!60) & *(white!60) & *(white!60) & *(blue!60) & *(blue!60)\\
\none & *(red!60) & *(white!60) & *(white!60) & *(blue!60) & *(blue!60) & *(blue!60)\\
\none & *(white!60) & *(white!60) & *(white!60) & *(white!60) & *(white!60) & *(white!60) & *(blue!60)\\
\none & *(red!60) & *(white!60) & *(white!60) & *(white!60) & *(white!60) & *(white!60) & *(blue!60) & *(blue!60)\\
\none & *(red!60) & *(white!60) & *(white!60) & *(white!60) & *(white!60) & *(white!60) & *(white!60) & *(white!60) & *(blue!60)\\
\end{ytableau}

\caption{Suppose, for example, that $n = 9$ and $a_1 = 1, a_2 = 2, a_3 = 4, a_4 = 7,$ and $a_5 = 9.$ Then, the blue boxes are the only other boxes not in column $1$ that do not share an edge with any red boxes in the downcore graph. Therefore, any maximal independent set containing all red boxes and no other boxes in the first column must have size $17$.}
\label{Converse}
\end{figure}

  Therefore, we are left with $k$ boxes in the first column and staircases of size $a_{t+1}-a_t$. Each of the $k$ smaller staircases can be filled to the maximum of $2(a_{t+1}-a_t)-1$ elements since they do not overlap with or share edges with any selected boxes in the first column or any boxes in the other smaller staircase grids. Then, by our induction hypothesis, any maximal independent set with $(1, a_1), \ldots, (1, a_k)$ chosen from the first column has size $$k + \sum\limits_{t = 1}^{k-1} \left(2(a_{t+1}-a_t)-1\right) = k + 2(n-1)-(k-1) = 1 + 2n-2 = 2n-1,$$ as desired.
\end{proof}

Combining Theorems \ref{thm:Conj6.1Hard} and \ref{thm:reverse} concludes our proof of Conjecture \ref{Conjecture2}.

West \cite{West1996} proved that $|Av_n(123, 2143)| = F_{2n-1},$ where $F_n$ is the Fibonacci sequence with $F_1 = F_2 = 1.$ As a result, we have the following:

\begin{corollary} \label{Counting}
	The number of (possibly skew-decomposable) permutations in $Av_n(123)$ with boundary grids with pure downcore graphs equals $F_{2n-1}.$
\end{corollary}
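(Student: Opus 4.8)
The plan is to read off the corollary directly from the full resolution of Conjecture \ref{Conjecture2} together with West's enumeration. First I would observe that Theorems \ref{thm:Conj6.1Hard} and \ref{thm:reverse} establish both directions of Conjecture \ref{Conjecture2}, so that for any $\pi \in Av_n(123)$ the downcore graph of the boundary grid of $\pi$ is pure if and only if $\pi$ avoids $2143$. Consequently, the set of permutations in $Av_n(123)$ whose boundary grids have pure downcore graphs is exactly $Av_n(123) \cap Av_n(2143) = Av_n(123, 2143)$. From here the claim is purely enumerative: West \cite{West1996} gives $|Av_n(123, 2143)| = F_{2n-1}$, and combining this with the characterization yields the stated count of $F_{2n-1}$.

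The one point that genuinely requires care, and which I believe motivates the parenthetical emphasis on skew-decomposable permutations, is that Conjecture \ref{Conjecture2} is phrased for \emph{all} $123$-avoiding permutations, including those whose boundary grid is disconnected. For such a $\pi$, the downcore graph is a disjoint union of the downcore graphs of the connected components of the boundary grid (no two boxes in distinct components share an edge, since purity of the whole is what is at issue). I would note that a disjoint union of graphs is pure if and only if each of its components is pure: every maximal independent set of the union is a disjoint union of maximal independent sets, one taken from each component, so all such sets have equal size precisely when each component contributes a fixed size. This is exactly the reduction already invoked in the proof of Theorem \ref{thm:Conj6.1Hard} to dispatch the skew-decomposable case, so the characterization applies verbatim to every $\pi \in Av_n(123)$ and no extra argument is needed.

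Because every step after the characterization is immediate, there is no substantive obstacle here; the entire content of the corollary is already packaged into Conjecture \ref{Conjecture2} and West's theorem. The only things to watch are the two bookkeeping points above, namely that purity of a possibly disconnected downcore graph reduces cleanly to purity of its components, and that the Fibonacci indexing convention $F_1 = F_2 = 1$ is matched to West's normalization so that the final count reads $F_{2n-1}$ rather than an off-by-one shift.
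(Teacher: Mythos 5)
Your proposal is correct and follows essentially the same route as the paper, which likewise just combines the resolution of Conjecture \ref{Conjecture2} (Theorems \ref{thm:Conj6.1Hard} and \ref{thm:reverse}) with West's enumeration $|Av_n(123,2143)| = F_{2n-1}$. Your explicit justification that purity of a disconnected downcore graph reduces to purity of its components is a point the paper leaves implicit, and it is a worthwhile addition rather than a deviation.
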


\newpage
\section*{Acknowledgments}
This research was funded by NSF grant 1358659 and NSA grant H98230-16-1-0026
as part of the 2016 Duluth Research Experience for Undergraduates (REU).

The author would like to thank Prof. Joe Gallian for running a wonderful REU and supervising the research, as well as for suggesting the problem.  The author would also like to thank Joe for helpful comments on editing the paper.

\bibliographystyle{plain}

\end{document}